\newtheorem{T}{Theorem}
\newtheorem{theorem}{Proposition}[section]
\newtheorem{lemma}[theorem]{Lemma}
\newtheorem{cor}[theorem]{Corollary}
\theoremstyle{definition}
\newtheorem*{T*}{Theorem}
\newtheorem{defi}[theorem]{Definition}
\newtheorem{example}[theorem]{Example}
\newtheorem{remark}[theorem]{Remark}
\newcommand{\N}{\mathbb{N}}
\newcommand{\Z}{\mathbb{Z}}
\newcommand{\R}{\mathbb{R}}
\DeclareMathOperator{\SL}{SL}
\DeclareMathOperator{\PSL}{PSL}
\DeclareMathOperator{\cl}{cl}
\DeclareMathOperator{\scl}{scl}
\DeclareMathOperator{\Aut}{Aut}
\DeclareMathOperator{\Out}{Out}
\DeclareMathOperator{\Inn}{Inn}
\DeclareMathOperator{\sgn}{sgn}
\DeclareMathOperator{\Acode}{\mbox{$A$}-code}
\DeclareMathOperator{\Bcode}{\mbox{$B$}-code}
\DeclareMathOperator{\Ccode}{\mbox{$C$}-code}
\DeclareMathOperator{\Zcode}{\mathbb{Z}-code}
\title{AUT-INVARIANT QUASIMORPHISMS ON FREE PRODUCTS}
\date{%
}
\author{BASTIEN KARLHOFER}
\begin{document}
\maketitle

\begin{abstract}
Let $G=A \ast B$ be a free product of freely indecomposable groups. We explicitly construct quasimorphisms on $G$ which are invariant with respect to all automorphisms of $G$. We also prove that the space of such quasimorphisms is infinite-dimensional whenever $G$ is not the infinite dihedral group. As an application we prove that an invariant analogue of stable commutator length recently introduced by Kawasaki and Kimura is non-trivial for these groups. 
\end{abstract}

\section{Introduction}

The study of quasimorphisms on a given group $G$ is an important branch of geometric group theory \cite{Calegari} with quasimorphisms sharing deep relationships with the underlying structure of the group $G$. For free groups $F_n$ the so called counting quasimorphisms originating from the work of Brooks in \cite{Brooks} yield a wide variety of examples. His ideas have been developed further by Calegari and Fujiwara who constructed unbounded quasimorphisms on non-elementary hyperbolic groups \cite{Fuji}.  For diffeomorphism groups of surfaces many important constructions are given in \cite{Gamb}. There are also numerous applications in symplectic geometry originating from work of Entov and Polterovich \cite{Entov}. Another fundamental paper on the geometry of quasimorphisms and central extensions is \cite{Barge}.

In this paper we construct unbounded Aut-invariant quasimorphisms on free products of groups. To achieve this we associate tuples of natural numbers we call codes to each element in a free product $G = A \ast B$. Inspired by Brooks counting quasimorphisms on free groups we then count these codes rather than actual elements of the free product and verify that this indeed yields quasimorphisms on $G$. We call them code quasimorphisms. We make use of an explicit description of the automorphism group of a free product found in \cite{Gilbert} to see in Proposition \ref{freeprod no Z prop} that our code quasimorphisms are unbounded and invariant with respect to all automorphisms of $G$ if $A$ and $B$ are not infinite cyclic. 

If one of the factors of $G=A \ast B$ is infinite cyclic, our code quasi-morphisms are not necessarily invariant under a specific class of automorphisms of $G$ which is called the class of transvections. So, we slightly adjust the way we count codes for infinite cyclic factors and call the resulting maps weighted code quasimorphisms. We show in Proposition \ref{freeprod with Z} that these are unbounded and invariant with respect to all automorphisms of $G$.

These two propositions together with an independent result for the free group on two generators from \cite[Theorem 2]{BraMarc} comprise the following result in Section 6 which is the main result of the paper.

\begin{T}\label{T1}
Let $G= A \ast B$ be the free product of two non-trivial freely indecomposable groups $A$ and $B$. Assume $G$ is not the infinite dihedral group. Then $G$ admits infinitely many linearly independent homogeneous Aut-invariant quasimorphisms, all of which vanish on single letters. 
\end{T}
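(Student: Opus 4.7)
The plan is to perform a case analysis on the factors $A$ and $B$, invoking a different construction in each case, and to deduce linear independence and vanishing on letters from the explicit nature of the (weighted) code quasimorphisms.

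First I would split according to how many factors are infinite cyclic. If neither $A$ nor $B$ is isomorphic to $\Z$, then Proposition \ref{freeprod no Z prop} already produces unbounded Aut-invariant code quasimorphisms. If exactly one of the factors is isomorphic to $\Z$, Proposition \ref{freeprod with Z} does the same using weighted code quasimorphisms. The remaining case is $A \cong B \cong \Z$, i.e.\ $G \cong F_2$, which is not covered by the two propositions since both factors are infinite cyclic and $F_2$ is, up to the excluded infinite dihedral group, the only remaining instance; here I would cite \cite[Theorem 2]{BraMarc} to obtain an infinite-dimensional space of homogeneous Aut-invariant quasimorphisms on $F_2$. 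The excluded infinite dihedral case $\Z/2 \ast \Z/2$ is precisely the situation where both factors are non-$\Z$ but $G$ is virtually cyclic and known to carry no unbounded quasimorphisms, which is why it is ruled out.

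Next I would pass from ``unbounded and Aut-invariant'' (which is what the propositions give for a single code) to ``infinitely many linearly independent''. The code construction is parametrised by codes, which are tuples of natural numbers; by choosing an infinite sequence of codes of strictly increasing length (or complexity) one obtains an infinite family $\{\phi_c\}_{c}$ of code quasimorphisms. Linear independence would be established by an evaluation-on-test-words argument in the style of Brooks: for each code $c$ I would exhibit an element $g_c \in G$ on which $\phi_c$ takes a large value while all $\phi_{c'}$ with $c' \neq c$ in some finite subfamily vanish, so that any finite linear combination which is identically zero must have vanishing coefficients. This uses the concrete counting nature of the (weighted) code quasimorphisms and is precisely the step I expect to require the most care.

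Finally, to ensure homogeneity I would replace each $\phi_c$ by its homogenisation $\hat\phi_c(g) = \lim_{n \to \infty} \phi_c(g^n)/n$; this preserves Aut-invariance since the limit is taken pointwise and the $\Aut(G)$-action commutes with taking powers, and it preserves linear independence up to bounded error, which does not affect homogeneous representatives. Vanishing on single letters $a \in A$ or $b \in B$ should be immediate from the construction: a single letter contains no non-trivial code as a subword, so $\phi_c$ is zero on it, and a power $a^n$ still reduces to a single letter in $A$ (resp.\ $B$), so the homogenisation vanishes as well. The main obstacle is thus the linear-independence step; the rest is bookkeeping combining the two propositions with the $F_2$ case from \cite{BraMarc}.
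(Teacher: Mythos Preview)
Your case analysis is exactly the paper's: invoke Proposition~\ref{freeprod no Z prop} when neither factor is infinite cyclic, Proposition~\ref{freeprod with Z} when exactly one is, and \cite[Theorem~2]{BraMarc} for $F_2$. Two remarks are in order.

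First, you are planning to do work that is already packaged inside the propositions. Both Proposition~\ref{freeprod no Z prop} and Proposition~\ref{freeprod with Z} conclude not merely that a single $\bar f_z$ is unbounded and Aut-invariant, but that \emph{the space of homogeneous Aut-invariant quasimorphisms vanishing on letters is infinite-dimensional}. The linear-independence argument (choosing a new generic tuple $z_{r+1}$ whose entries avoid all entries of $z_1,\dots,z_r$, and evaluating on the test word built from $z_{r+1}$) and the vanishing-on-letters observation are carried out inside those proofs. So what you call ``the main obstacle'' has already been discharged; you need only cite the propositions.

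Second, there is a genuine gap in your treatment of the $F_2$ case. Your argument for vanishing on letters (``a single letter contains no non-trivial code'') applies only to the code and weighted-code quasimorphisms. For $A\cong B\cong\Z$ you are importing quasimorphisms from \cite{BraMarc}, which are not code quasimorphisms, so that reasoning does not apply. The paper fills this as follows: the map inverting both generators is an automorphism of $\Z\ast\Z$ and sends every letter to its inverse; hence any homogeneous Aut-invariant $\psi$ satisfies $\psi(a)=\psi(a^{-1})=-\psi(a)$ on each letter $a$, forcing $\psi(a)=0$. You should add this one-line argument to cover the $F_2$ case.
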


The infinite dihedral group does not admit any unbounded quasimorphism since all its elements are conjugate to their inverses. As a corollary of our construction we immediately deduce the existence of stably unbounded Aut-invariant norms on free products of two factors. 

\begin{cor}\label{MainCor}
Let $G= A \ast B$ be the free product of two non-trivial freely indecomposable groups and assume $G$ is not the infinite dihedral group. Then there exists a stably unbounded Aut-invariant norm on $G$.
\end{cor}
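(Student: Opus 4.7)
The plan is to derive Corollary~\ref{MainCor} directly from Theorem~\ref{T1} via the standard passage from a homogeneous Aut-invariant quasimorphism to a stably unbounded Aut-invariant word norm. The input is any non-zero homogeneous Aut-invariant quasimorphism $\phi:G\to\R$ that vanishes on single letters, the existence of which is furnished by Theorem~\ref{T1} under the stated hypotheses.

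First I would fix an Aut-invariant symmetric generating set of $G$, namely
\[
S=\{\,g\,x\,g^{-1}\ :\ g\in G,\ x\in (A\cup B)\setminus\{e\}\,\},
\]
the set of $G$-conjugates of non-trivial elements of the two free factors. This set is symmetric, it generates $G$ (since $A\cup B$ does), and it is Aut-invariant: because $A$ and $B$ are freely indecomposable, every automorphism of $G=A\ast B$ sends each factor to a conjugate of $A$ or of $B$, a structural fact about $\Aut(A\ast B)$ drawn from \cite{Gilbert} and already used in the proof of Theorem~\ref{T1}. Let $\nu:=\|\cdot\|_S$ be the associated word length. Then $\nu$ is an Aut-invariant norm on $G$.

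Next I would use the vanishing statement in Theorem~\ref{T1}. Since $\phi$ is homogeneous it is conjugation invariant, so vanishing on $A\cup B$ forces $\phi|_S\equiv 0$. Choose $g\in G$ with $\phi(g)\neq 0$, which is possible because $\phi$ is non-zero. For each $n\geq 1$ write $g^n=s_1\cdots s_k$ with $s_i\in S$ and $k=\nu(g^n)$ minimal. With $D$ the defect of $\phi$, the quasimorphism inequality together with $\phi(s_i)=0$ yields
\[
n\,|\phi(g)|\;=\;|\phi(g^n)|\;\leq\;(k-1)D\;=\;(\nu(g^n)-1)D,
\]
so $\nu(g^n)/n \geq |\phi(g)|/D$. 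Taking $n\to\infty$, the stable norm of $g$ is at least $|\phi(g)|/D>0$, so $\nu$ is stably unbounded.

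The only genuinely non-formal step is verifying that the set $S$ is Aut-invariant, which is exactly where freely indecomposable factors are needed; this rests on the description of $\Aut(A\ast B)$ already exploited earlier in the paper, so no new obstacle arises. Everything else is a routine conversion of an unbounded homogeneous Aut-invariant quasimorphism vanishing on $A\cup B$ into a stable lower bound on a word length built from an Aut-invariant, conjugation-closed generating set.
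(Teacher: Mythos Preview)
Your overall strategy matches the paper's: apply Theorem~\ref{T1} to get a homogeneous Aut-invariant quasimorphism vanishing on letters, then convert this into a stably unbounded Aut-invariant word norm. The paper packages this conversion as Lemma~\ref{Autqm implies Autnorm}, taking as generating set the full $\Aut(G)$-orbit of the letters, $\bar S=\{\varphi(s):s\in A\cup B,\ \varphi\in\Aut(G)\}$, which is Aut-invariant by construction; vanishing of $\phi$ on $\bar S$ then follows directly from Aut-invariance of $\phi$.

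Your version has a genuine gap in the choice of generating set. The set $S$ of \emph{conjugates} of letters is \emph{not} Aut-invariant when one of the factors is infinite cyclic. Concretely, if $A=\Z=\langle s\rangle$ and $a\in B$ is non-trivial, the transvection $\varphi$ with $\varphi(s)=as$ sends the letter $s$ to $as$, and $as$ is cyclically reduced of length $2$, hence not conjugate to any letter. The structural claim you invoke (that every automorphism sends each factor to a conjugate of a factor) is precisely the uniqueness part of the Grushko decomposition, which explicitly excludes infinite cyclic factors; it is not something used in the proof of Theorem~\ref{T1}, which instead verifies invariance of the quasimorphisms against the explicit generators of $\Aut(A\ast B)$. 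The repair is immediate: replace your $S$ by the Aut-orbit $\bar S$ of the letters. Then Aut-invariance of $\bar S$ is tautological, and $\phi|_{\bar S}\equiv 0$ follows from the Aut-invariance of $\phi$ (rather than from conjugation-invariance). With this change your estimate $n\,|\phi(g)|\le(\nu(g^n)-1)D(\phi)$ goes through unchanged and yields the desired stably unbounded Aut-invariant norm, exactly as in the paper.
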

 
We denote the Aut-invariant stable commutator length, which was recently introduced by Kawasaki and Kimura in \cite{KK}, by $\scl_{\Aut}$. As an application of our construction we prove the following result in Section 7.

\begin{T}\label{T2}
Let $G=A \ast B$ be a free product of freely indecomposable groups and assume that $G$ is not the infinite dihedral group. Then there always exist elements $g \in G$ with positive Aut-invariant stable commutator length $\scl_{\Aut}(g) > 0 $. 
\end{T}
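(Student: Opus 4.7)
The plan is to reduce Theorem \ref{T2} to the existence part of Theorem \ref{T1} by invoking a Bavard-type duality for the Aut-invariant stable commutator length. Kawasaki and Kimura establish in \cite{KK} the inequality
$$\scl_{\Aut}(g) \;\geq\; \frac{|\phi(g)|}{2\,D(\phi)}$$
for every homogeneous Aut-invariant quasimorphism $\phi$ on $G$ with defect $D(\phi)>0$ and every element $g \in G$. This reduces the task to producing a homogeneous Aut-invariant quasimorphism $\phi$ which is non-zero at some element of $G$.

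I would then invoke Theorem \ref{T1} to obtain an infinite-dimensional space of homogeneous Aut-invariant quasimorphisms on $G$; in particular one may pick a $\phi$ which is not identically zero. Since $\phi$ is not the zero map, there exists $g \in G$ with $\phi(g) \neq 0$, and the duality inequality above then forces $\scl_{\Aut}(g) > 0$, which is exactly the content of Theorem \ref{T2}.

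The main potential obstacle is ensuring that the chosen $\phi$ has strictly positive defect, so that the denominator in the duality inequality does not vanish. This should not be a serious issue: a homogeneous quasimorphism of defect zero is a homomorphism to $\mathbb{R}$, and the space of such homomorphisms is finite-dimensional, whereas Theorem \ref{T1} provides an infinite-dimensional space of homogeneous Aut-invariant quasimorphisms. One can therefore select $\phi$ outside the finite-dimensional subspace of homomorphisms, guaranteeing both $D(\phi)>0$ and non-triviality, and then pick any $g$ on which this $\phi$ does not vanish.
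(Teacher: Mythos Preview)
Your argument has a genuine gap concerning the domain of $\scl_{\Aut}$. By definition $\scl_{\Aut}$ is only defined on the subgroup $[\Aut(G),G]$, and the paper explicitly warns that free products often satisfy $[\Aut(G),G]\neq G$ (e.g.\ $G=\Z/3\ast\Z/2$). Accordingly, the Kawasaki--Kimura inequality (Lemma~\ref{Kawasaki Lemma}) applies only to $x\in[\Aut(G),G]$, not to arbitrary $g\in G$ as you state. Picking any $g$ with $\phi(g)\neq 0$ is therefore not enough; you must produce such a $g$ inside $[\Aut(G),G]$. The paper treats exactly this as the main difficulty: when a factor is $\Z$ it shows $[\Aut(G),G]$ has index at most~$2$ (Lemma~\ref{Aut finite index}), so any unbounded quasimorphism stays unbounded there; otherwise it explicitly builds words $w\in[\Aut(G),G]$ out of commutators $[f,a]$ with $f$ a factor automorphism, and then tailors a code quasimorphism to that specific $w$.

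Your approach can be rescued, however. Since free products have trivial centre, $[G,G]\leq[\Aut(G),G]$; and for a homogeneous quasimorphism one has $D(\phi)=\sup_{g,h}|\phi([g,h])|$ (see \cite{Calegari}), so $D(\phi)>0$ already forces $\phi$ to be non-zero on some ordinary commutator, hence on an element of $[\Aut(G),G]$. Your worry about ensuring $D(\phi)>0$ is also simpler than you suggest: the quasimorphisms of Theorem~\ref{T1} vanish on all letters, so any of them that is a homomorphism is identically zero --- no finite-dimensionality argument is needed (and indeed $\mathrm{Hom}(G,\R)$ need not be finite-dimensional, since $A$ and $B$ are not assumed finitely generated). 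With these two fixes your route is shorter and more conceptual than the paper's constructive proof, at the cost of not producing explicit witnesses.
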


\section{Preliminaries}

\begin{defi}
For any group $G$ we denote by $\Aut(G)$ the group of all automorphisms of $G$. Moreover, we denote the normal subgroup of inner automorphisms by $\Inn(G)$ and define the group of outer automorphisms of $G$ to be the quotient $\Out(G)=\Aut(G)/\Inn(G)$.
\end{defi}

\begin{defi}
Let $G$ be a group. A map $\psi \colon G \rightarrow \R$ is called a \textit{quasimorphism} if there exists a constant $D \geq 0$ such that 
\begin{align*}
|\psi(g)+\psi(h)-\psi(gh)| \leq D \hspace{1mm} \text{for all} \hspace{1mm} g,h \in G.
\end{align*}
The smallest number $D(\psi)$ with the above property is called the \textit{defect} of $\psi$.
We call a quasimorphism \textit{homogeneous} if it satisfies $\psi(g^n)=n \psi(g)$ for all $g \in G$ and all $n \in \Z$. If  $\psi(\varphi(g))=\psi(g)$ for all $g \in G$, $\varphi \in \Aut(G)$, then $\psi$ is called Aut-invariant.
\end{defi}

\begin{defi}
Let $\psi \colon G \to \R$ be a quasimorphism. Then the \textit{homogenisation} $\bar{\psi} \colon G \to \R$ of $\psi$ is defined by $\bar{\psi}(g)= \lim_{n \in \N} \frac{\psi(g^n)}{n}$ for all $g \in G$.
\end{defi}

\begin{lemma}[{\cite[p.18]{Calegari}}] \label{HomogenisationQM}
The homogenisation $\bar{\psi}$ of a quasimorphism $\psi \colon G \to \R$ is a homogeneous quasimorphism. Moreover, it satisfies $|\bar{\psi}(g)- \psi(g)| \leq D(\psi)$ for any $g \in G$. 
\end{lemma}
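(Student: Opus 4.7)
The plan is to establish four things in order: finiteness of the limit defining $\bar\psi(g)$, the pointwise bound $|\bar\psi(g)-\psi(g)|\leq D(\psi)$, homogeneity of $\bar\psi$, and finally the quasimorphism property. The key preliminary fact, from which almost everything follows, is the inequality $|\psi(g^n)-n\psi(g)|\leq (n-1)D(\psi)$, which I would prove by induction on $n$ using the defect inequality applied to the factorisation $g^{n}=g^{n-1}\cdot g$.

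For existence of the limit, the inequality above already shows that $\psi(g^n)/n$ is bounded. To upgrade this to convergence, I would observe that the sequence $a_n=\psi(g^n)+D(\psi)$ is subadditive: indeed, $a_{m+n}\leq a_m+a_n$ follows immediately from $\psi(g^{m+n})\leq \psi(g^m)+\psi(g^n)+D(\psi)$. Fekete's lemma then yields convergence of $a_n/n$ to its infimum in $[-\infty,+\infty)$; the boundedness from below already established rules out $-\infty$, so the limit is finite and equals $\bar\psi(g)$. Dividing the inductive bound by $n$ and passing to the limit then gives $|\bar\psi(g)-\psi(g)|\leq D(\psi)$.

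For homogeneity, the case of a positive integer exponent $k$ is immediate from the limit definition via $\bar\psi(g^k)=\lim_n \psi(g^{kn})/n=k\lim_n\psi(g^{kn})/(kn)=k\bar\psi(g)$. The case $k=0$ follows from $|\psi(e)|\leq D(\psi)$, which is a consequence of the defect inequality applied to $e\cdot e$. For $k<0$, the bound $|\psi(g^n)+\psi(g^{-n})|\leq 2D(\psi)$, obtained by combining $|\psi(g^n g^{-n})-\psi(g^n)-\psi(g^{-n})|\leq D(\psi)$ with $|\psi(e)|\leq D(\psi)$, allows me to divide by $n$ and take the limit to conclude $\bar\psi(g^{-1})=-\bar\psi(g)$, and hence $\bar\psi(g^k)=k\bar\psi(g)$ for all $k\in\Z$.

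The quasimorphism property of $\bar\psi$ is then a direct consequence of the pointwise bound: for any $g,h\in G$ the triangle inequality gives
\[
|\bar\psi(gh)-\bar\psi(g)-\bar\psi(h)|\leq |\bar\psi(gh)-\psi(gh)|+|\psi(gh)-\psi(g)-\psi(h)|+|\psi(g)-\bar\psi(g)|+|\psi(h)-\bar\psi(h)|\leq 4D(\psi),
\]
so $\bar\psi$ is a quasimorphism. I do not foresee any significant obstacle; the main subtlety lies in ruling out divergence to $-\infty$ in the application of Fekete's lemma, which is addressed by the two-sided inductive bound established at the outset.
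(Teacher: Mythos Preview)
Your proof is correct and follows the standard argument. Note, however, that the paper does not actually give its own proof of this lemma: it is stated with a citation to \cite[p.18]{Calegari} and no proof is supplied. Your argument is essentially the one found in Calegari's book, so there is no meaningful difference in approach to discuss.
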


\begin{defi}
A function $\nu \colon G \to \R$ satisfying for all $g, h \in G$:
\begin{itemize}
\item $\nu(g) \geq 0$, 
\item $\nu(g)=0$ if and only if $g=1$, 
\item $\nu(gh) \leq \nu(g)+ \nu(h)$,
\end{itemize}
is called a \textit{norm} on $G$. If in addition for all $g \in G$ and $\varphi \in \Aut(G)$ it satisfies 
\begin{itemize}
\item $\nu(\varphi(g))= \nu(g)$, 
\end{itemize}
then it is called \textit{Aut-invariant}. The supremum $\nu(G)= \sup \{\nu(g) \mid g \in G \}$ is called the \textit{diameter} of the norm $\nu$. If $\nu(G)= \infty$, then $\nu$ is called \textit{unbounded}.  If there exists $g \in G$ such that $\lim_{n \to \infty} \nu(g^n)= \infty$, then $\nu$ is called \textit{stably unbounded}.
\end{defi}

\begin{example}
Let $G$ be a group together with a generating set $S$. The \textit{word norm} generated by $S$ is the norm on $G$ defined by 
\begin{align*}
\nu_S(g)= \min \{ n \mid g= s_1 \cdots s_n \hspace{1mm} \text{where} \hspace{1mm} n \in \N \hspace{1mm} \text{and} \hspace{1mm} s_i \in S \hspace{1mm} \text{for all} \hspace{1mm} i \}.
\end{align*}
If we assume additionally that the set $S$ is invariant under $\Aut(G)$ then $\nu_S$ is Aut-invariant. 
\end{example}

\begin{lemma} \label{Autqm implies Autnorm}
Let $\psi \colon G \to R$ be an Aut-invariant quasimorphism with unbounded image, but bounded on a generating set $S$ of $G$. Then there exists a stably unbounded Aut-invariant norm on $G$.
\end{lemma}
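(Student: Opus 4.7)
The plan is to build the desired norm as a word norm with respect to an $\Aut(G)$-invariant enlargement of $S$, and then use the homogenisation of $\psi$ to certify stable unboundedness.

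First I would define $\tilde{S} = \bigcup_{\varphi \in \Aut(G)} \varphi(S \cup S^{-1})$. Since $S$ generates $G$, so does $\tilde{S}$, and by construction $\tilde{S}$ is invariant under $\Aut(G)$ (and symmetric). By the preceding example, the word norm $\nu_{\tilde{S}}$ is then an Aut-invariant norm on $G$. Because $\psi$ is Aut-invariant and bounded on $S$, say $|\psi(s)| \leq M$ for every $s \in S$, the same bound $M$ controls $\psi$ on all of $\tilde{S}$.

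Next I would relate $\nu_{\tilde{S}}$ to $\psi$ via the quasimorphism inequality. For any factorisation $g = s_1 \cdots s_n$ with $s_i \in \tilde{S}$, an iterated application of the defect inequality gives
\[
|\psi(g)| \leq \sum_{i=1}^n |\psi(s_i)| + (n-1)D(\psi) \leq n\bigl(M + D(\psi)\bigr),
\]
so that $\nu_{\tilde{S}}(g) \geq |\psi(g)|/(M + D(\psi))$ for every $g \in G$. It remains to produce an element whose $\psi$-values grow linearly along powers.

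For this I would pass to the homogenisation $\bar{\psi}$ provided by Lemma \ref{HomogenisationQM}, which satisfies $|\bar{\psi}(g) - \psi(g)| \leq D(\psi)$. If $\bar{\psi}$ were identically zero, then $\psi$ would be bounded by $D(\psi)$ globally, contradicting the hypothesis that $\psi$ has unbounded image. Hence there exists $g \in G$ with $\bar{\psi}(g) \neq 0$, and then
\[
|\psi(g^n)| \geq n|\bar{\psi}(g)| - D(\psi) \longrightarrow \infty.
\]
Combining this with the lower bound from the previous paragraph yields $\nu_{\tilde{S}}(g^n) \to \infty$, so $\nu_{\tilde{S}}$ is stably unbounded.

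There is no real obstacle beyond two small subtleties: one must remember to symmetrise and take the full $\Aut(G)$-orbit of $S$ in order to upgrade the word norm to an Aut-invariant one, and one must use the homogenisation (not merely the assumption of unbounded image of $\psi$) to exhibit an element of $G$ whose norm grows along powers rather than only being attained once.
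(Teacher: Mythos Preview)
Your argument is correct and mirrors the paper's: both form the word norm on the $\Aut(G)$-orbit of $S$, bound $|\psi|$ from above by a constant times this norm via the defect inequality, and use the homogenisation to exhibit an element whose norm grows linearly along powers (the paper homogenises $\psi$ at the outset, you at the end, which is purely cosmetic). One tiny imprecision: the bound $M$ on $S$ need not hold verbatim on $S^{-1}$, but $M+2D(\psi)$ does, and this changes nothing.
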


\begin{proof}
By Lemma \ref{HomogenisationQM} we can assume that $\psi$ is homogeneous. The word norm $\|.\|_{\bar{S}}$ on $G$ associated to the generating set $\bar{S}=\{\varphi(s) \mid s \in S, \varphi \in \Aut(G) \}$ is clearly Aut-invariant. Let $K$ be a positive bound for the absolute value of $\psi$ on $S$. Write $g \in G$ as a product $g=\varphi_1(s_1) \cdots \varphi_n(s_n)$ for some $n \in \N$ where $s_i \in S$ and $\varphi_i \in \Aut(G)$ for all $i$. The calculation 
\begin{align*}
| \psi(g)| = | \psi(\varphi_1(s_1) \cdots \varphi_n(s_n)) | \leq |\psi(\varphi_1(s_1))|+ \dots + |\psi(\varphi_n(s_n))| \hspace{-0.1mm}+(n-1)D(\psi) \leq n(K + D(\psi)) 
\end{align*}
shows that $\|g\|_{\bar{S}} \geq \frac{|\psi(g)|}{K+D(\psi)}$ for all $g \in G$. It follows that $\|g^k\|_{\bar{S}} \geq k \cdot \frac{|\psi(g)|}{K+D(\psi)}$  for all $k \in \N$, $g \in G$. Since $\psi$ does not vanish everywhere, $\|.\|_{\bar{S}}$ is a stably unbounded Aut-invariant norm on $G$. 
\end{proof}

The following examples illustrate that the converse of Lemma \ref{Autqm implies Autnorm} above is not true and finding unbounded Aut-invariant quasi-morphisms is much more difficult than finding unbounded Aut-invariant norms.

\begin{example} 
Let $\Sigma_\infty$ be the infinite symmetric group of finitely supported bijections of the natural numbers. The cardinality of the support defines an Aut-invariant norm of infinite diameter on $\Sigma_\infty$. However, any element $g \in \Sigma_\infty$ has finite order. Therefore, no Aut-invariant norm on $\Sigma_\infty$ is stably unbounded and any homogeneous quasi-morphism vanishes on all of $\Sigma_\infty$. Consequently, by Lemma \ref{HomogenisationQM} any quasimorphism on $\Sigma_\infty$ is bounded.  
\end{example}

\begin{example}
Let $G= \Z^k$ for $k \geq 1$. Since every $g \in G$ lies in the same $\Aut(G)$-orbit as $g^{-1}$, it follows that any homogeneous Aut-invariant quasimorphism vanishes on all of $G$. So any Aut-invariant quasimorphism is bounded on $G$. For $k=1$ the standard absolute value defines a stably unbounded Aut-invariant norm on $G$, whereas for $k \geq 2$ any Aut-invariant norm on $G$ has finite diameter. 
\end{example}

\begin{example} \label{Kleinbottle}
Let $G$ be the fundamental group of the Klein bottle $G= \Z \ast_{2\Z} \Z$. Let $a$ and $b$ be generators of the two infinite cyclic factors of $G$ in its above presentation. Consider the Aut-invariant word norm $\nu_S$ generated by $S= \{ \varphi(a^{\pm 1}), \varphi(b^{\pm 1}) \mid \varphi \in \Aut(G) \}$. To see that this norm is unbounded on $G$ we first note that commutator subgroup of $G$ is a characteristic subgroup and $G/[G,G]= \Z/2 \times \Z$, where $\Z/2$ is a characteristic subgroup again. Consequently, the projection map $p \colon G \to \Z$ sending $p(a)=p(b)=1$ maps the set $S$ to the Aut-invariant set $\{\pm 1\}$ in $\Z$, which generates a stably unbounded Aut-invariant norm on $\Z$. Therefore, the word norm $\nu_S$ generated by $S$ on $G$ is stably unbounded as well. 

However, there is no unbounded Aut-invariant quasimorphism on $G$. If $\psi$ was such a quasimorphism, it could be chosen to be homogeneous by Lemma \ref{HomogenisationQM}. Let $\varphi$ be the automorphism inverting the generators $a$ and $b$. Then $\varphi$ inverts the center $Z(G)=2 \Z$ as well. Hence, $\psi$ vanishes on $Z(G)$. Similarly, every element of $S=\{(ab)^n, (ba)^n, (ab)^n a , (ba)^n b \hspace{1mm}|\hspace{1mm} n \in \N \}$ belongs to the same Aut-orbit that its inverse belongs to. So $\psi$ vanishes on $S$ as well. However, every element $g \in G$ can be written as a product $=zs$ where $z \in Z(G)$ and $s \in S$. Therefore, $\psi$ is bounded on all of $G$.
\end{example}

\begin{defi}
Let $G= \ast_{i \in I} G_i$ be a free product of a family of groups $\{G_i\}_{i \in I}$ for some indexing set $I$. For each $i$ the factor $G_i$ is a subgroup of $G$ via the canonical inclusion. An element of $G$ that belongs to one of the factors is called a \textit{letter} of $G$. Any product of letters is called a \textit{word} in $G$. The product of any two letters belonging to the same factor in $G$ can be replaced by the letter that represents their product in that factor. Moreover, any identity letters appearing in a word can be omitted without changing the element the word represents in $G$.  Recall that any element $g \in G$ has a unique presentation as a word, where no two consecutive letters lie in the same factor and no identity letters appear. Such a word is called \textit{reduced}.
\end{defi}

\begin{lemma} \label{building quasimorphisms}
Let $I$ be a set of cardinality at least two. Let $G_i$ be a non-trivial group for all $i$ and $G=\ast_{i \in I} G_i$ be their free product. Let $\theta \colon G \to \R$ be a map whose absolute value is bounded on all letters of $G$ by a constant $B \geq 0$. Assume that there exists a constant $D \geq 0$ such that
\begin{align*}
| \theta(w_1 w_2 ) - \theta (w_1) - \theta (w_2) | \leq D
\end{align*}
holds for all reduced words $w_1, w_2$ for which their product $w_1 w_2$ is a reduced word. Then the map $f \colon G \to \R$ defined by $f(w)= \theta(w) - \theta(w^{-1})$ defines a quasimorphism of defect at most $12D+6B$, which is bounded on all letters by $2B$.  
\end{lemma}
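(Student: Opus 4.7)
The bound $|f(w)| \leq 2B$ on letters is immediate, since for any letter $w$ the inverse $w^{-1}$ is also a letter, so $|f(w)| \leq |\theta(w)| + |\theta(w^{-1})| \leq 2B$. The real work is bounding the defect.

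For the defect, I would use the standard normal form description of multiplication in a free product. Given non-trivial $g, h \in G$ with reduced forms $g = g_1 \cdots g_m$ and $h = h_1 \cdots h_n$, let $k$ be the largest index such that $h_i = g_{m-i+1}^{-1}$ for $i = 1, \dots, k$. Setting $q = g_{m-k+1}\cdots g_m$, $a = g_{m-k}$, $b = h_{k+1}$, $p' = g_1 \cdots g_{m-k-1}$, $r' = h_{k+2} \cdots h_n$, I get decompositions
\begin{align*}
g = p' a q, \qquad h = q^{-1} b r', \qquad gh = p' c r',
\end{align*}
all three written as reduced words, where $c$ is either the concatenation $ab$ (if $a, b$ lie in distinct factors of $G$) or the non-trivial single-letter product $ab$ (if $a, b$ lie in the same factor; non-triviality being forced by maximality of $k$). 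The cases when any of $p'$, $q$, $r'$ is empty, or when one of $g, h$ is a letter, or when one fully cancels into the other, will require minor separate bookkeeping but do not change the structure of the argument.

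The core computation is now to expand each of the six quantities $\theta(g), \theta(h), \theta(gh), \theta(g^{-1}), \theta(h^{-1}), \theta((gh)^{-1})$ as a sum of $\theta$-values on the pieces $p'^{\pm 1}, q^{\pm 1}, r'^{\pm 1}$ and the letters $a^{\pm 1}, b^{\pm 1}, c^{\pm 1}$. Since each of the six elements is a reduced concatenation of at most three pieces, each expansion costs at most two applications of the almost-additivity hypothesis, introducing an error of at most $2D$ per expansion, so $12D$ in total across the six expressions. Adding and subtracting these approximations for $f(g) + f(h) - f(gh)$, all terms involving $p'$, $q$, $r'$ and their inverses cancel telescopically (this is the reason for building $f$ by antisymmetrising $\theta$: the $q$ and $q^{-1}$ contributions cancel between $f(g)$ and $f(h)$, and the $p', r'$ contributions cancel between $f(g) + f(h)$ and $f(gh)$).

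What remains of $f(g) + f(h) - f(gh)$ is a signed sum involving only $\theta$ applied to the six letters $a^{\pm 1}, b^{\pm 1}, c^{\pm 1}$, each bounded by $B$ in absolute value, contributing a further $6B$ to the error. This gives the required bound of $12D + 6B$ on the defect of $f$. The only step I expect to require care is the case distinction at the boundary letters $a, b$ (distinct factor versus common factor, and empty-piece degeneracies), together with verifying that the telescoping of the middle pieces really does work in both situations; once the decomposition $g = p' a q$, $h = q^{-1} b r'$, $gh = p' c r'$ is in place, the rest is a mechanical error accounting.
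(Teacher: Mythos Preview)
Your approach is essentially the same as the paper's: the same normal-form decomposition, the same telescoping after antisymmetrisation, and the same error accounting. One point needs correcting, though. In the case where your boundary letters $a,b$ lie in \emph{distinct} factors, the element $c=ab$ is a reduced two-letter word, not a single letter, so the assertion that $|\theta(c^{\pm 1})|\le B$ fails and the residual is not bounded by $6B$ as you claim. The paper handles this case by absorbing $a$ into $p'$ and $b$ into $r'$, i.e.\ writing $g=pq$, $h=q^{-1}r$, $gh=pr$ as reduced concatenations of only two pieces each; then each of the six expansions costs a single $D$, the residual cancels to zero, and the total error is at most $6D\le 12D+6B$. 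With that adjustment your argument goes through and matches the paper's proof.
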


\begin{proof}
Any element in $G$ can be represented by a reduced word. So let $w_1$, $w_2$ be reduced words. The word given by their product $w_1 w_2$ is reduced if and only if the last letter from $w_1$ belongs to a factor different from the one that the first letter of $w_2$ belongs to. Indeed, otherwise those two letters could be multiplied in their common factor and replaced by their product to shorten the number of letters appearing in the expression. 

In order to bring $w_1 \cdot w_2$ to its reduced form we first perform all cancellations which form a word we call $c$. After all cancellations have taken place the final potential reduction is to possibly replace a non-trivial product of two letters $b$ and $d$ belonging to the same factor by a non-trivial letter $x$ representing their product in that factor. Therefore, we have two cases.
\begin{itemize}
\item The reduced presentations of $w_1$ and $w_2$ are given by $w_1= ac$, $w_2=c^{-1}e$ and $ae$ is the reduced presentation for $w_1 \cdot w_2$.
\item The reduced presentations of $w_1$ and $w_2$ are given by $w_1= abc$, $w_2 = c^{-1} de$, where $b$ and $d$ are letters belonging to the same factor. The reduced presentation of $w_1 \cdot w_2$ is given by $a x e$, where $x=bd$ is the letter representing the non-trivial product of $b$ and $d$.
\end{itemize}
  We calculate for the second case that
\begin{align*}
 |f(w_1 w_2) - f(w_1) & -f(w_2)| =   |f(axe) - f(abc) -f(c^{-1} de)| \\
= &  \left | \theta(axe) - \theta(e^{-1} x^{-1} a^{-1}) - \theta(abc) + \theta(c^{-1} b^{-1} a^{-1}) - \theta(c^{-1} de) + \theta (e^{-1} d^{-1} c) \right | \\
 \leq  & | \theta(a) + \theta(x) + \theta(e) - \theta(e^{-1}) - \theta(x^{-1}) - \theta(a^{-1}) - \theta(a) - \theta(b) - \theta(c) + \theta(c^{-1})\\
 &  + \theta(b^{-1}) + \theta(a^{-1}) - \theta(c^{-1}) - \theta(d) - \theta(e) + \theta(e^{-1}) + \theta(d^{-1}) + \theta(c)| + 12D\\
= &  | \theta(x) - \theta(x^{-1}) - \theta(b) + \theta(b^{-1}) - \theta(d) + \theta(d^{-1})| + 12D \\
\leq & 6B + 12D.
\end{align*}
The first case follows analogously. Since $w_1$, $w_2$ were arbitrary reduced words and every element of $G$ can be written in its reduced form, $f$ is a quasimorphism of defect at most $6B+ 12D$. Since $\theta$ is bounded on all letters by $B$, so is $f$ by $2B$. 
\end{proof}

\section{Aut-invariant quasimorphisms}

\begin{defi}
We call a group $G$ \textit{freely indecomposable} if $G$ is non-trivial and not isomorphic to any free product of the form $G_1 \ast G_2$ where $G_1$, $G_2$ are non-trivial groups.
\end{defi}

Any free product of non-trivial groups has trivial center and contains elements of infinite order. So every abelian group and every finite group is freely indecomposable.

\begin{lemma}\label{homogenisation Aut invariant}
Let $\psi \colon G \to \R$ be a quasimorphism. Let $\{\varphi_i \}_{i \in I}$ be a set of representatives for the elements of $\Out(G)$. If $\psi$ is invariant under $\varphi_i$ for all $i$, then its homogenisation $\bar{\psi} \colon G \to \R$ is invariant under all automorphisms of $G$. 
\end{lemma}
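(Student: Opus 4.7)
The plan is to reduce invariance under all automorphisms to two sub-claims: (i) $\bar\psi$ inherits invariance under each representative $\varphi_i$ from $\psi$, and (ii) $\bar\psi$ is automatically invariant under every inner automorphism because it is homogeneous. Once these are in hand, an arbitrary $\varphi \in \Aut(G)$ can be written as $\varphi = \iota \circ \varphi_i$ for some inner $\iota$ and some representative $\varphi_i$ (using that $\Inn(G)$ is normal), and the two invariances compose.

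For step (i), I would compute directly from the definition of $\bar\psi$: since $\varphi_i$ is a group homomorphism, $\varphi_i(g)^n = \varphi_i(g^n)$, and hence
\begin{align*}
\bar\psi(\varphi_i(g)) = \lim_{n \to \infty} \frac{\psi(\varphi_i(g)^n)}{n} = \lim_{n \to \infty} \frac{\psi(\varphi_i(g^n))}{n} = \lim_{n \to \infty} \frac{\psi(g^n)}{n} = \bar\psi(g),
\end{align*}
where the third equality uses $\varphi_i$-invariance of $\psi$.

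For step (ii), I would appeal to the standard fact that a homogeneous quasimorphism is a class function. To prove it, observe $(hgh^{-1})^n = hg^nh^{-1}$ and apply the defect bound to $h \cdot g^n \cdot h^{-1}$ twice to get $|\psi(hg^nh^{-1}) - \psi(g^n)| \le 2D(\psi) + 2|\psi(h)|$, a constant independent of $n$; dividing by $n$ and passing to the limit via Lemma \ref{HomogenisationQM} gives $\bar\psi(hgh^{-1}) = \bar\psi(g)$.

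Finally, for any $\varphi \in \Aut(G)$, choose the representative $\varphi_i$ with $\varphi\varphi_i^{-1} \in \Inn(G)$, say $\varphi\varphi_i^{-1} = \iota_h$ for some $h \in G$. Then
\begin{align*}
\bar\psi(\varphi(g)) = \bar\psi(\iota_h(\varphi_i(g))) = \bar\psi(\varphi_i(g)) = \bar\psi(g),
\end{align*}
using (ii) and then (i). The only mild subtlety is taking care that the homogenisation preserves the invariances of $\psi$ (step (i)), but this is immediate from the homomorphism property; step (ii) is where the homogeneity is essentially used, and it is the actual content of the lemma.
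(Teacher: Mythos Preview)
Your proof is correct and follows essentially the same approach as the paper: establish that $\bar\psi$ is constant on conjugacy classes, that it inherits $\varphi_i$-invariance from $\psi$, and then decompose an arbitrary automorphism as an inner automorphism composed with a representative. The only difference is that the paper cites \cite[p.19]{Calegari} for the class-function property while you prove it directly (your bound $2D(\psi)+2|\psi(h)|$ should strictly be $2D(\psi)+|\psi(h)|+|\psi(h^{-1})|$, but this is harmless since all that matters is independence from $n$).
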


\begin{proof}
The homogenisation $\bar{\psi}$ is constant on conjugacy classes \cite[p.19]{Calegari}.
By definition $\bar{\psi}$ is also invariant under the collection $\{ \varphi_i \}_{i \in I}$, since $\psi$ is. The result follows since any element $\varphi \in \Aut(G)$ can be written as the composition of some $\varphi_j$ with a conjugation.
\end{proof}

Consider the free product $G= G_1 \ast G_2$ where $G_i$ is freely indecomposable for $i=1,2$. Following the exposition in \cite[p.116]{Gilbert} based on results in \cite{Fouxe1} and \cite{Fouxe2} the automorphism group $\Aut(G_1 \ast G_2)$ is generated by the following types of automorphisms (1.-3.) if neither $G_1$ nor $G_2$ is infinite cyclic: 
\begin{enumerate}
\item Elements from $\Aut(G_1)$ and $\Aut(G_2)$ give rise to automorphisms of $G_1 \ast G_2$. These are called \textit{factor automorphisms}.
\item Let $g \in G_i$ for some $i \in \{ 1,2 \}$. Define the map $p_g \colon G \to G$ to be conjugation by $g$ on the letters of $G_j$ for $j \neq i$ and to be the identity on all letters from the group $G_i$. This definition gives rise to an automorphism of $G$ which is called a \textit{partial conjugation}.
\item If $G_1 \cong G_2$ are isomorphic, interchanging the two factors is an automorphism of $G$. Such an automorphism is called a \textit{swap automorphism}.   
\end{enumerate}
If $G_1 \cong \Z$ is infinite cyclic and the freely indecomposable group $G_2$ is not, then $\Aut(G_1 \ast G_2)$ is generated by the above automorphisms together with the following additional type of automorphisms: 
\begin{enumerate}[resume]
\item Let $s$ be a generator of $G_1$ and let $a \in G_2$ be any element. Then a \textit{transvection} is the unique automorphism of $G_1 \ast G_2$ defined to be the identity on all letters from $G_2$ and maps $s \to as$ or $s \to sa$. 
\end{enumerate}

Following the above description of the group of automorphisms of a free product of two factors we obtain:

\begin{lemma}\label{outrepresentatives}
Let $G_1$, $G_2$ be freely indecomposable groups such that $G_2$ is not infinite cyclic. Then the outer automorphism group of their free product $\Out(G_1 \ast G_2)$ is generated by the images of $\Aut(G_1)$, $\Aut(G_2)$ in $\Out(G_1 \ast G_2)$ together with a swap automorphism if $G_1 \cong G_2$ and the transvections if $G_1 \cong \Z$.
\end{lemma}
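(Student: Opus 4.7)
The plan is to leverage the explicit generating set for $\Aut(G_1 \ast G_2)$ recalled in the preceding paragraphs, namely: factor automorphisms coming from $\Aut(G_1)$ and $\Aut(G_2)$, partial conjugations $p_g$, together with swap automorphisms if $G_1 \cong G_2$ and transvections if $G_1 \cong \Z$. Since $\Out(G_1 \ast G_2) = \Aut(G_1 \ast G_2)/\Inn(G_1 \ast G_2)$, it is enough to show that the image in $\Out(G_1 \ast G_2)$ of every partial conjugation already lies in the subgroup generated by the images of factor automorphisms; the other generator types project directly to the generators claimed in the statement.

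The key calculation is to write each partial conjugation as a factor automorphism composed with an inner automorphism. Fix $i \in \{1,2\}$ and $g \in G_i$, and let $\iota_g \in \Inn(G_1 \ast G_2)$ denote conjugation by $g$. On letters from $G_j$ with $j \neq i$ both $p_g$ and $\iota_g$ act as conjugation by $g$, so they agree there. On letters $h \in G_i$ we have $p_g(h) = h$ while $\iota_g(h) = g h g^{-1}$, which still lies in $G_i$ because $g \in G_i$. Hence $\iota_g \circ p_g^{-1}$ is the identity on $G_j$ and restricts to the inner automorphism $h \mapsto g h g^{-1}$ of $G_i$; in other words, $\iota_g \circ p_g^{-1}$ equals some factor automorphism $\alpha \in \Aut(G_i) \subset \Aut(G_1 \ast G_2)$. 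Rearranging gives $p_g = \alpha^{-1} \circ \iota_g$, so in $\Out(G_1 \ast G_2)$ the class of $p_g$ coincides with the class of the factor automorphism $\alpha^{-1}$.

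With this reduction in hand the lemma follows immediately: projecting the generating set of $\Aut(G_1 \ast G_2)$ into $\Out(G_1 \ast G_2)$ yields exactly the images of $\Aut(G_1)$ and $\Aut(G_2)$, the image of the swap automorphism when $G_1 \cong G_2$, and the images of the transvections when $G_1 \cong \Z$. The hypothesis that $G_2$ is not infinite cyclic is what permits us to apply the generating set from the case analysis recalled before the lemma without having to introduce an additional family of transvections acting on the second factor.

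I do not expect any real obstacle: the whole argument is bookkeeping on top of the cited generating set, and the only genuine content is the short computation identifying $p_g$ modulo $\Inn(G_1 \ast G_2)$. The only point requiring a little care is to handle both possibilities $i=1$ and $i=2$ uniformly, which the computation above already does, and in particular to note that when $G_1 \cong \Z$ and $g$ generates $G_1$ the resulting factor automorphism $\alpha$ is simply the identity, so that $p_g$ is in fact inner in that degenerate case.
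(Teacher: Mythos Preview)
Your proposal is correct and follows essentially the same route as the paper: both arguments reduce to showing that each partial conjugation $p_g$ with $g \in G_i$ equals, modulo an inner automorphism, the factor automorphism given by conjugation by $g$ (or $g^{-1}$) on $G_i$, which is exactly the computation you carry out. The only small point you gloss over that the paper makes explicit is that different choices of swap automorphism (coming from different identifications $G_1 \cong G_2$) differ by factor automorphisms, so a single swap suffices; this is harmless for the statement as phrased.
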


\begin{proof}
By the universal property of the free product of two groups any automorphism is uniquely determined by its image on single letters. Let $h \in G_1$ and denote conjugation by $h^{-1}$ on all of $G$ by $c_h$. Then
\begin{align*}
(c_h \circ p_h)(g) = 
\begin{cases}
 h^{-1} g h & \text{if } \hspace{1mm} g \in G_1, \\
 g & \text{if } \hspace{1mm} g \in G_2.
\end{cases}
\end{align*}
Thus, $p_h$ and the factor automorphism given by conjugation by $h^{-1}$ on $G_1$ represent the same element in $\Out(G)$. Similarly, in $\Out(G)$ partial conjugations on $G_1$ by elements from $G_2$ represent the same elements that factor automorphisms from $G_2$ do. Finally, any two choices of swap automorphism differ by a product of factor automorphisms.
\end{proof}

\begin{lemma} \label{Gen6}
Let $G$ be a group and $H \leq G$ be a characteristic subgroup with quotient projection $p \colon G \rightarrow G/H$. Then for any unbounded Aut-invariant quasimorphism $\psi \colon G/H \to \R$ the composition $\psi \circ p \colon G \to \R$ is 
an unbounded Aut-invariant quasimorphism on $G$. Moreover, linearly independent quasimorphisms on $G/H$ give rise to linearly independent quasimorphisms on $G$. 
\end{lemma}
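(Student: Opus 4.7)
The plan is to verify the four required properties in turn, each of which reduces to a direct consequence of $p$ being a surjective homomorphism together with $H$ being characteristic.

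First I would show that $\psi \circ p$ is a quasimorphism. Since $p$ is a homomorphism, for any $g,h \in G$ one has $p(gh)=p(g)p(h)$, so
\begin{align*}
|\psi(p(gh))-\psi(p(g))-\psi(p(h))| = |\psi(p(g)p(h))-\psi(p(g))-\psi(p(h))| \leq D(\psi),
\end{align*}
so $\psi \circ p$ is a quasimorphism of defect at most $D(\psi)$. Unboundedness is equally immediate: since $p$ is surjective, the image of $\psi \circ p$ equals the image of $\psi$, which is unbounded by assumption.

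Next I would prove Aut-invariance. The key point, and the only place where the characteristic hypothesis is used, is that every $\varphi \in \Aut(G)$ restricts to an automorphism of $H$ and therefore descends to a well-defined $\bar{\varphi} \in \Aut(G/H)$ satisfying $p \circ \varphi = \bar{\varphi} \circ p$. Applying the Aut-invariance of $\psi$ on the quotient,
\begin{align*}
(\psi \circ p)(\varphi(g)) = \psi(\bar{\varphi}(p(g))) = \psi(p(g)) = (\psi \circ p)(g).
\end{align*}

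Finally, for linear independence, suppose $\psi_1,\dots,\psi_n \colon G/H \to \R$ are linearly independent and that $\sum_{i=1}^n c_i (\psi_i \circ p) = 0$ on $G$ for some scalars $c_i \in \R$. Given any $y \in G/H$, pick $g \in G$ with $p(g)=y$ using surjectivity of $p$; then $\sum_i c_i \psi_i(y) = 0$, so $\sum_i c_i \psi_i = 0$ on $G/H$ and hence $c_1 = \cdots = c_n = 0$. No step here is really an obstacle; the whole statement is a formal consequence of $p$ being a surjective homomorphism and $H$ being $\Aut(G)$-invariant, and the mild subtlety worth being explicit about is just the descent of automorphisms along $p$.
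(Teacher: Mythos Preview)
Your proof is correct and follows essentially the same approach as the paper's; you simply spell out in detail what the paper dismisses in three terse sentences (quasimorphism property, Aut-invariance via descent of automorphisms along the characteristic quotient, and linear independence from surjectivity of $p$). The only thing you add is the explicit remark on unboundedness, which the paper omits from its proof but which is indeed immediate from surjectivity of $p$ as you note.
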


\begin{proof}
Clearly, $\psi \circ p$ is a quasimorphism. The Aut-invariance of $\psi \circ p$ on $G$ follows from the Aut-invariance of $\psi$ on $G/H$ together with the fact that $H$ is characteristic. Finally, the statement about linear independence follows from the surjectivity of the projection to the quotient. 
\end{proof}

\section{Code quasimorphisms}

Recall that a \textit{tuple} always refers to a finite sequence and so all tuples are naturally ordered.

\begin{defi}
Let $A$ and $B$ be groups. Write a given element $ g \in A \ast B$ in its reduced form. We assign two tuples of non-zero natural number that we will call \textit{codes} as follows. Let $( a_1, \dots, a_k)$ be the tuple of letters from $A$ appearing in the reduced form of $g$. We call $( a_1, \dots, a_k)$ the \textit{A-tuple} of $g$. Then we count how often any one letter of $( a_1, \dots, a_k)$ appears consecutively. This yields a tuple of positive numbers $\Acode(g)=(n_1, n_2, \dots, n_r)$ which we call the \textit{A-code} of $g$. Similarly, we obtain the \textit{B-tuple}, which is the tuple of letters from $B$ appearing in the reduced form of $g$, and the \textit{B-code} of $g$, denoted $\Bcode(g)$, by counting consecutive appearances of letters in the \textit{B-tuple}.  

\end{defi}

Note that $\Acode(g)$ and $\Bcode(g)$ might have very different length for elements $g \in A \ast B$ in general.

\begin{example}\label{examplecode}
Let $G=A \ast B$ where $A= \Z/5$ and $B$ is any group. Let $a \in A, b \in B$ be non-trivial elements. Consider $g=a^2bababa^4baba$. The $A$-tuple of $g$ is $(a^2,a,a,a^4,a,a)$ and therefore $\Acode(g)=(1, 2, 1, 2)$. However, the $B$-tuple of $g$ is $(b,b,b,b,b)$ and so $\Bcode(g)=(5)$.
\end{example}

\begin{remark} \label{Autinvariance code}
The code of any element $g \in A \ast B$ is clearly invariant under all factor automorphisms.
\end{remark}

The following lemma is immediate. 

\begin{lemma} \label{code counting mirror}
The $\Acode$ and $\Bcode$ of $g^{-1}$ are the reversed A- and $\Bcode$ of $g$ for any $g \in A \ast B$. That is, let $\Acode(g)=(n_1, \dots, n_k)$ and  $\Bcode(g)=(m_1, \dots, m_\ell)$, then $\Acode(g^{-1})=(n_k, \dots , n_1)$ and $\Bcode(g^{-1})=(m_\ell, \dots, m_1)$. \qed
\end{lemma}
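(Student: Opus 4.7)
The plan is to deduce the lemma directly from how reduced forms behave under inversion. Write $g$ in its reduced form $g = w_1 w_2 \cdots w_n$, where the $w_i$ are non-trivial letters alternating between the factors $A$ and $B$. Then $g^{-1} = w_n^{-1} w_{n-1}^{-1} \cdots w_1^{-1}$; since $w_i^{-1}$ lies in the same factor as $w_i$ and is again non-trivial, this expression is already reduced. First I would record this as the key structural observation.

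Next I would read off the tuples. The A-tuple of $g^{-1}$ consists of the letters $w_i^{-1}$ with $w_i \in A$, listed in the order dictated by the reduced form of $g^{-1}$, which is exactly the reverse of the order in which the $A$-letters appear in $g$. Thus the A-tuple of $g^{-1}$ is the entry-wise inverse of the reversal of the A-tuple of $g$. The same argument applies to the B-tuple.

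Finally, I would pass from tuples to codes. A maximal run of $k$ consecutive appearances of a letter $a$ in the A-tuple of $g$ corresponds, after inversion and reversal, to a maximal run of $k$ consecutive appearances of $a^{-1}$ in the A-tuple of $g^{-1}$; distinct runs remain distinct because inversion on a factor is a bijection and sends distinct letters to distinct letters. These runs appear in reverse order, so the sequence of block lengths, which by definition is the A-code, is simply reversed. The argument for $\Bcode$ is identical. Since no step required more than tracking reduced forms through inversion, I do not expect any real obstacle; the entire proof is essentially bookkeeping, which is presumably why the author calls it immediate.
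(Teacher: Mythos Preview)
Your argument is correct and is exactly the routine unpacking the paper has in mind: the paper gives no proof at all, marking the lemma as ``immediate'' with a \qed, and your bookkeeping via the reduced form of $g^{-1}$ is precisely what makes it so.
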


In the spirit of Brooks counting quasimorphisms we will now define \textit{code quasimorphisms}, which are counting the occurrences of a string of natural numbers in the $\Acode$ and $\Bcode$ associated to an element in the free product $A \ast B$. 

\begin{defi}[Code quasimorphisms]
Let $k \geq 1$ and let $z=(n_1, \dots ,n_k)$ be a tuple of non-zero natural numbers $n_1, \dots ,n_k$ for some $k \in \N$. Let $C \in \{ A, B \}$. Define
$\theta^C_z \colon A \ast B \to \Z_{\geq 0}$ to count the maximal number of \textit{disjoint} appearances of $z$ as a tuple of consecutive numbers in the $\Ccode$ for all $g \in A \ast B$. Further, define the \textit{code quasimorphism} 
\begin{align*}
f^C_z \colon A \ast B \to \Z \hspace{5mm} \text{by} \hspace{5mm} f^C_z(g)= \theta^C_{z}(g) - \theta^C_{z}(g^{-1})
\end{align*}
for all $g \in A \ast B$. Note that $\theta^C_{z}(g^{-1})= \theta^C_{\bar{z}}(g)$ due to Lemma \ref{code counting mirror}, where $\bar{z}$ denotes the reversed tuple $(n_k, \dots, n_1)$. Consequently, $f^C_z(g)$ can also be written as $f^C_z(g)= \theta^C_{z}(g) - \theta^C_{\bar{z}}(g)$ for all $g \in G$.
\end{defi}

\begin{example}
Let $G=\Z/5 \ast B$ and $g=a^2bababa^4baba$ for non-trivial $a \in A, b \in B$ as in Example \ref{examplecode}. For $z=(1,2)$ we calculate $\theta^A_z(g)=2$ and $\theta^A_{z}(g^{-1})= \theta^A_{\bar{z}}(g)=1$ and so $f^A_z(g)=2-1=1$.
\end{example}

\begin{example}
Let $G=\Z/5 \ast B$ and $g=a^4bababa^3bababa^3$ for non-trivial $a \in A, b \in B$. In this case $\Acode(g)=(1,2,1,2,1)$. Then $\theta^A_z(g)=1$ for $z=(1,2,1)$ since we only count disjoint occurrences. Similarly, $\theta^A_{z}(g^{-1})= \theta^A_{\bar{z}}(g)=1$ and so $f^A_z(g)=0$.
\end{example}

\begin{lemma}\label{codeqm}
Let $A$, $B$ be non-trivial groups and let $C \in \{A, B \}$. For a non-empty tuple of non-zero natural numbers $z$ the map $f^C_z \colon A \ast B \to \Z$ defines a quasimorphism that is bounded on letters and invariant with respect to all factor automorphisms. Moreover, $D(f^C_z) \leq 30$. 
\end{lemma}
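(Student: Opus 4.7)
The plan is to apply Lemma \ref{building quasimorphisms} directly with $\theta = \theta^C_z$, since $f^C_z(w) = \theta^C_z(w) - \theta^C_z(w^{-1})$ is already in the exact form that lemma produces its quasimorphism from. To get the claimed defect bound $30 = 12D + 6B$, I aim for $B = 1$ (bound of $|\theta^C_z|$ on letters) and $D = 2$ (quasi-additivity constant on reduced products). The letter bound is immediate: if $g$ is a single letter, then $\Ccode(g)$ is either empty or equal to $(1)$, so $\theta^C_z(g) \in \{0,1\}$. Invariance under factor automorphisms transfers from $\theta^C_z$ to $f^C_z$ automatically, and for $\theta^C_z$ it follows from the observation that a factor automorphism of $C$ acts as a bijection on the letters of $C$ and therefore preserves equalities between consecutive entries of the $C$-tuple, while a factor automorphism of the other factor leaves the $C$-tuple untouched; in either case the $C$-code is preserved.

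For the quasi-additivity, let $w_1, w_2$ be reduced words whose product $w_1 w_2$ is again reduced, and set $\Ccode(w_1) = (n_1, \ldots, n_r)$ and $\Ccode(w_2) = (m_1, \ldots, m_s)$. Two cases arise. If the last $C$-letter of the $C$-tuple of $w_1$ differs from the first $C$-letter of the $C$-tuple of $w_2$, or one of them is empty, then $\Ccode(w_1 w_2)$ is the plain concatenation; any family of disjoint occurrences of $z$ splits into those lying strictly on one side of the boundary plus at most one straddling occurrence (since any two disjoint occurrences both containing the boundary entries would have to coincide on those entries), so $|\theta^C_z(w_1w_2) - \theta^C_z(w_1) - \theta^C_z(w_2)| \leq 1$. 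If instead the two boundary $C$-letters agree, then $\Ccode(w_1 w_2)$ is the merged tuple $(n_1, \ldots, n_{r-1}, n_r + m_1, m_2, \ldots, m_s)$; the same "at most one crossing" observation applied to the merged position yields the upper bound $\theta^C_z(w_1) + \theta^C_z(w_2) + 1$, while taking optimal disjoint packings inside each $\Ccode(w_i)$ and discarding the at most one occurrence touching the boundary entry on each side transfers a packing of size at least $\theta^C_z(w_1) + \theta^C_z(w_2) - 2$ into $\Ccode(w_1 w_2)$. Hence $D = 2$ works in both cases.

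The hard part is the merge case just above: there $\Ccode(w_1 w_2)$ is strictly shorter than the concatenation and the new entry $n_r + m_1$ is an entry of neither $\Ccode(w_i)$, so disjoint occurrences in the merged code do not correspond literally to occurrences in the two pieces. The remedy is to classify occurrences according to whether they use the merged position, and bound each class separately as above. Once $B = 1$ and $D = 2$ are in place, Lemma \ref{building quasimorphisms} produces $f^C_z$ as a quasimorphism of defect at most $12 \cdot 2 + 6 \cdot 1 = 30$, bounded on letters by $2B = 2$, and the factor-automorphism invariance of $\theta^C_z$ passes through to $f^C_z$, completing the proof.
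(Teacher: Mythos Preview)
Your proof is correct and follows essentially the same approach as the paper: apply Lemma \ref{building quasimorphisms} with $B=1$ and $D=2$, establishing the latter by the same two-case analysis (concatenated versus merged $\Ccode$) with the identical bounds in each case. Your justification of factor-automorphism invariance is slightly more explicit than the paper's bare reference to Remark \ref{Autinvariance code}, but the substance is the same.
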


\begin{proof}
We want to apply Lemma \ref{building quasimorphisms} to deduce that $f^C_z$ is a quasimorphism. Clearly, $|\theta^C_z(x)| \leq 1 $ for all letters $x \in A \ast B$ and all $z$. Let $w_1, w_2$ be reduced words representing elements in $A \ast B$ such that their product $w_1 w_2$ is reduced. That is, the last letter of $w_1$ and the first letter of $w_2$ belong to different factors. Without loss of generality we can assume $C=A$. Let $\Acode(w_1)=(n_1, \dots, n_k)$ and  $\Acode(w_2)=(m_1, \dots , m_\ell)$. Let $x$ be the last letter from $A$ in $w_1$ and let $y$ be the first letter from $A$ in $w_2$. Then
\begin{align*}
\Acode(w_1 w_2) = 
\begin{cases}
(n_1, \dots, n_k, m_1 , \dots , m_\ell) & \text{if} \hspace{1mm} x \neq  y, \\
(n_1, \dots n_{k-1}, n_k+m_1, m_2, \dots, m_\ell)  & \text{if} \hspace{1mm} x=y.
\end{cases}
\end{align*}

If $x \neq y$, then $\theta^C_z(w_1 w_2 ) \in \{ \theta^C_z(w_1) + \theta^C_z(w_2), \theta^C_z(w_1) + \theta^C_z(w_2)+1 \}$ since at most one of the disjoint occurrences of $z$ can involve numbers that do not lie completely in the $\Acode$ of either $w_1$ or $w_2$. 

If $x=y$, then $\theta^C_z(w_1 w_2 ) \geq \theta^C_z(w_1) + \theta^C_z(w_2) - 2$ since $n_k$ and $m_1$ can each be contained in at most one occurrences of $z$ in the $\Acode$ of $w_1$ and $w_2$. Moreover, if an occurrence of $z$ in the $\Acode$ of $w_1 w_2$ involves $n_k + m_1$, then all other occurrences are fully contained in either the $\Acode$ of $w_1$ or $w_2$. Thus, $\theta^C_z(w_1 w_2 ) \leq \theta^C_z(w_1) + \theta^C_z(w_2) +1 $. 

In both cases we conclude 
\begin{align*}
| \theta^C_z(w_1 w_2 ) - \theta^C_z(w_1) - \theta^C_z(w_2)| \leq 2,
\end{align*}
and it follows from Lemma \ref{building quasimorphisms} that $f^C_z$ is a quasimorphism of defect $D(f^C_z) \leq 30$.

Moreover, by Remark \ref{Autinvariance code} the maps $\theta^C_z$ are invariant under all factor automorphisms of $A \ast B$. Consequently, $f^C_z= \theta^C_{z} - \theta^C_{\bar{z}}$ is invariant under factor automorphisms as well.
\end{proof} 

\begin{defi}
A tuple of non-zero natural numbers $z=(n_1, \dots, n_k)$  is called \textit{generic} if $\bar{z}$ does not appear as a tuple of $k$ adjacent numbers in $z^2=(n_1, \dots, n_k, n_1, \dots n_k)$. 
\end{defi}

\begin{example}
Let $z=(n_1, \dots, n_k)$. If $k \leq 2$, $z$ is not generic. If $k \geq 3$ and the $n_i$ are pairwise distinct, then $z$ is generic. E.g. for $z=(1,2,3)$ we have $\bar{z}=(3,2,1)$ does not appear in $z^2=(1,2,3,1,2,3)$. 
\end{example}

\begin{theorem} \label{freeprod no Z prop}
Let $A \ast B$ be a free product of two freely indecomposable groups $A$ and $B$, neither of which is infinite cyclic. Then for any generic tuple of natural numbers $z$ the following holds:
\begin{enumerate}
\item if $A \ncong B$ and $C \in \{A,B \}$ is such that $C \ncong \Z/2$, then the homogenisation $\bar{f}^C_z$ of the quasimorphism $f^C_z$ is an unbounded Aut-invariant quasimorphism on $A \ast B$;
\item if $A \cong B \ncong \Z/2$, then the sum $\bar{f}^A_z + \bar{f}^B_z$ is an unbounded Aut-invariant quasimorphism on $A \ast B$. 
\end{enumerate}
In both cases the space of homogeneous Aut-invariant quasimorphisms on $A \ast B$ that vanish on letters has infinite dimension.
\end{theorem}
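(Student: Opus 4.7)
The strategy is a direct application of the three preceding lemmas. By Lemma \ref{codeqm}, each $f^C_z$ is a quasimorphism invariant under all factor automorphisms of $A \ast B$. Since neither $A$ nor $B$ is infinite cyclic, Lemma \ref{outrepresentatives} says that $\Out(A \ast B)$ is generated by the classes of factor automorphisms together with, when $A \cong B$, a swap; no transvections arise. In case (1), invariance under factor automorphisms already provides invariance under a full set of representatives of $\Out(A \ast B)$, so Lemma \ref{homogenisation Aut invariant} upgrades $\bar f^C_z$ to an Aut-invariant quasimorphism. In case (2), a swap automorphism exchanges the $A$-code with the $B$-code by definition and therefore swaps $f^A_z$ with $f^B_z$; the sum is thus swap-invariant as well as factor-invariant, and Lemma \ref{homogenisation Aut invariant} applied to $f^A_z + f^B_z$ yields Aut-invariance of $\bar f^A_z + \bar f^B_z$.

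To show unboundedness I would produce an explicit witness. Pick a generic $z = (n_1, \dots, n_k)$ of even length $k$ and exploit $C \ncong \Z/2$ to select two distinct non-trivial elements $c_1 \ne c_2 \in C$ together with a non-trivial letter $d$ from the other factor. Then $w = (c_1 d)^{n_1}(c_2 d)^{n_2}(c_1 d)^{n_3} \cdots (c_2 d)^{n_k}$ has $\Ccode(w) = z$, ends with the $C$-letter $c_2$ and begins with $c_1$, so successive copies of $w$ do not merge at the splice and $\Ccode(w^N) = z^N$. Thus $\theta^C_z(w^N) \geq N$. Genericity of $z$ means $\bar z$ does not appear as a length-$k$ window inside $z^2$, hence not anywhere inside $z^N$, so $\theta^C_{\bar z}(w^N) = 0$. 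Therefore $f^C_z(w^N) \geq N$ and $\bar f^C_z(w) \geq 1$. In case (2) the same $w$ has every $B$-letter equal to $d$, so $\Bcode(w^N)$ has a single entry and $f^B_z(w^N) = 0$ for $k \geq 2$, so the sum $\bar f^A_z + \bar f^B_z$ is also non-zero at $w$ and hence unbounded.

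For the infinite-dimensional statement I would use the explicit family $z_n = (1,2,3,n)$ for $n \geq 4$. Each $z_n$ has length $k = 4$ and is generic, since the four distinct length-$4$ windows of $(1,2,3,n,1,2,3,n)$ are $(1,2,3,n)$, $(2,3,n,1)$, $(3,n,1,2)$, $(n,1,2,3)$ and none equals $(n,3,2,1) = \bar z_n$. The corresponding witnesses $w_n$ built as above satisfy $\Ccode(w_n^N) = z_n^N$, and for $m \ne n$ neither $z_m = (1,2,3,m)$ nor its reverse appears among those windows, so $\bar f^C_{z_m}(w_n) = 0$ while $\bar f^C_{z_n}(w_n) \geq 1$. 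The diagonal evaluation matrix then forces linear independence of the family $\{\bar f^C_{z_n}\}_{n \geq 4}$, respectively $\{\bar f^A_{z_n} + \bar f^B_{z_n}\}_{n \geq 4}$ in case (2). Vanishing on single letters is immediate: a letter has $C$-code of length at most one and cannot contain any length-$k$ tuple for $k \geq 3$, so $f^C_z$ and hence $\bar f^C_z$ vanish on all powers of a letter.

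The main obstacle, and where one must be careful, is ensuring clean concatenation in the explicit witness. If the last $C$-run of $w$ coincided as a letter with the first $C$-run, the two runs would merge at each splice and $\Ccode(w^N)$ would cease to equal $z^N$, potentially both inflating occurrences of spurious patterns at the boundary and introducing copies of $\bar z$ there. Working with generic tuples of even length $k$ circumvents this via the strict $c_1, c_2$ alternation and makes all the combinatorial estimates exact rather than merely up to a bounded boundary error; the genericity hypothesis is then used in its cleanest form, namely that $\bar z$ is forbidden as a window inside two consecutive copies $z^2$.
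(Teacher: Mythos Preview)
Your argument follows the paper's closely: Lemma \ref{codeqm} for factor-automorphism invariance, Lemma \ref{outrepresentatives} to see that (absent infinite cyclic factors) only a swap can remain, swap-invariance of the sum in case (2), and Lemma \ref{homogenisation Aut invariant} to upgrade to full Aut-invariance. Your witness construction is the paper's, and your linear-independence argument via the explicit family $z_n=(1,2,3,n)$ is a clean concrete variant of the paper's inductive one. Your simplification in case (2), using a single $B$-letter $d$ so that $f^B_z$ vanishes on the witness, is valid and slightly tidier than the paper's choice of making both codes equal to $z$.

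There is, however, one genuine gap. The statement asserts unboundedness of $\bar f^C_z$ (respectively $\bar f^A_z+\bar f^B_z$) for \emph{every} generic tuple $z$, and generic tuples of odd length certainly exist (e.g.\ $z=(1,2,3)$). Your witness only works when $k$ is even, as you yourself observe in the last paragraph: for odd $k$ the word $w=(c_1d)^{n_1}(c_2d)^{n_2}\cdots(c_1d)^{n_k}$ has both its first and last $C$-run equal to $c_1$, so in $w^N$ these boundary runs merge and $\Ccode(w^N)=(n_1,\dots,n_{k-1},n_k+n_1,n_2,\dots)$ rather than $z^N$. Restricting to even $k$ is not an option here since $z$ is given, not chosen. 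The paper's remedy is simple: when $k$ is odd, append one extra block $(c_2d)^m$ with $m$ a positive integer distinct from every $n_i$. Then $\Ccode(w)=(z,m)$, the last $C$-letter is $c_2\ne c_1$, so $\Ccode(w^N)=(z,m,z,m,\dots,z,m)$; since $m$ differs from all $n_i$, no occurrence of $z$ or $\bar z$ can involve an $m$, and one obtains $\theta^C_z(w^N)=N$ and $\theta^C_{\bar z}(w^N)=0$ exactly as in the even case. With this addition your proof is complete and matches the paper's.
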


\begin{proof}
First, consider the case $A \ncong B$. Since $A \ast B$ is not the infinite dihedral group, at least one of the factors is not isomorphic to $\Z/2$. Without loss of generality we assume $A \ncong \Z/2$. Let $z$ be generic. By Lemma \ref{codeqm} the map $f^A_z$ defines a quasimorphism invariant under all factor automorphisms. According to Lemma \ref{outrepresentatives} this means that $f^A_z$ is invariant under a full set of representatives for $\Out(A \ast B)$. Therefore, the homogenisation $\bar{f}^A_z$ is invariant under all automorphisms of $A \ast B$ by Lemma \ref{homogenisation Aut invariant}. It remains to check that $\bar{f}^A_z$ is unbounded, which is equivalent to checking that $f^A_z$ itself is unbounded by Lemma \ref{HomogenisationQM}. 

Since $A \ncong \Z/2$, it satisfies $|A| \geq 3$ and we can choose two distinct non-trivial elements $a_1,a_2 \in A$. Furthermore, choose a non-trivial element $b \in B$. Let $z = (n_1, \dots, n_k)$ and choose $m \in \N$ to be non-zero and distinct from all $n_i \in \N$. We set 
\begin{align*}
w_0=(a_1b)^{n_1} (a_2b)^{n_2} (a_1 b)^{n_3} (a_2b)^{n_4} \dots (a_sb)^{n_k},
\end{align*}
where $s=1$ if $k$ is odd and $s=2$ if $k$ is even. Set 
\begin{align*}
w= 
\begin{cases}
w_0 & \text{if } k \text{ is even},\\
w_0 (a_2b)^m & \text{if } k \text{ is odd}.
\end{cases}
\end{align*}
The $\Acode$ of $w$ is given by 
\begin{align*}
\Acode(w)= 
\begin{cases}
(n_1, \dots, n_k)=z & \text{if } k \text{ is even},\\
(n_1, \dots, n_k, m ) = (z,m) & \text{if } k \text{ is odd}.
\end{cases}
\end{align*}

Since $w$ starts and ends with letters from different groups, the reduced expression of $w^\ell$ is the $\ell$-fold product of the word $w$ for all $\ell \in \N$. Moreover, because the first letter from $A$ in $w$ is $a_1$ and the last letter from $A$ is $a_2$, the $\Acode$ of $w^\ell$ is 
\begin{align*}
\Acode(w^\ell)= 
\begin{cases}
(z, z, \dots, z) & \text{if } k \text{ is even},\\
(z, m, z, m, \dots, z, m) & \text{if } k \text{ is odd}.
\end{cases}
\end{align*}

Since $m$ is distinct from all $n_i$, $m$ can never appear in any occurrence of $z$ or $\bar{z}$ in the $\Acode$ of $w^\ell$. So $\theta^A_z(w^\ell)= \ell$, whereas $\theta^A_{\bar{z}}(w^\ell)=0$ since $z$ is generic. Consequently, 
\begin{align*}
f^A_z(w^\ell)=\theta^A_z(w^\ell) - \theta^A_{\bar{z}}(w^\ell)=\ell,
\end{align*}
which shows that $f^A_z$ is unbounded.

Second, consider the case $A \cong B$ and fix a choice of isomorphism. Let $z$ be generic. It holds that $|A|=|B| \geq 3$ since $A \ast B$ is not the infinite dihedral group. 
Consider the swap isomorphism $s$ interchanging the factors $A$ and $B$, where we use the fixed isomorphism from before to identify $A$ and $B$ with each other. Then the application of $s$ to any element $g$ interchanges the $\Acode$ and $\Bcode$ of $g$ with each other. This implies that the sum $\theta^A_z + \theta^B_z$ is invariant under $s$ and consequently the sum $f^A_z + f^B_z$ is invariant under $s$ as well.  Again, by Lemma \ref{codeqm} $f^A_z$ and $f^B_z$ define quasimorphisms invariant under all factor automorphisms and so does their sum $f^A_z + f^B_z$. According to Lemma \ref{outrepresentatives} this means that $f^A_z + f^B_z$ is invariant under a full set of representatives for $\Out(A \ast B)$. Again, by Lemma \ref{homogenisation Aut invariant} we see that the homogenisation $\bar{f}^A_z + \bar{f}^B_z$ is invariant under all automorphisms of $A \ast B$. It remains to verify unboundedness. 

For this let $a_1,a_2 \in A$ and $b_1,b_2 \in B$ be non-trivial such that $a_1 \neq a_2$ and $b_1 \neq b_2$. Pick a non-zero number $m \in \N$ distinct from all $n_i \in \N$, where $z=(n_1, \dots, n_k)$. As before, we set
\begin{align*}
w_0=(a_1b_1)^{n_1} (a_2b_2)^{n_2} (a_1 b_1)^{n_3} (a_2b_2)^{n_4} \dots (a_sb_s)^{n_k},
\end{align*}
where $s$ is 1 or 2 depending on whether $k$ is odd or even. We set 
\begin{align*}
w= 
\begin{cases}
w_0 & \text{if } k \text{ is even},\\
w_0 (a_2b_2)^m & \text{if } k \text{ is odd}.
\end{cases}
\end{align*}
Then the $\Acode$ and $\Bcode$ of $w$ agree and are given by 
\begin{align*}
\Acode(w)= \Bcode(w) =
\begin{cases}
(n_1, \dots, n_k)=z & \text{if } k \text{ is even},\\
(n_1, \dots, n_k, m ) = (z,m) & \text{if } k \text{ is odd}.
\end{cases}
\end{align*} 

Since $m$ is distinct from all $n_i$, $m$ can never appear in any occurrence of $z$ or $\bar{z}$ in the $\Acode$ and $\Bcode$ of $w^\ell$. As in the first case, $\theta^A_z(w^\ell)= \theta^B_z(w^\ell)=\ell$, whereas $\theta^A_{\bar{z}}(w^\ell) = \theta^B_{\bar{z}}(w^\ell)=0$ since $z$ is generic. Consequently, 
\begin{align*}
f^A_z(w^\ell) + f^B_z(w^\ell)= \theta^A_z(w^\ell) + \theta^B_z(w^\ell) - \theta^A_{\bar{z}}(w^\ell) - \theta^B_{\bar{z}}(w^\ell)= 2 \ell,
\end{align*}
which shows that $f^A_z + f^B_z$ is unbounded and therefore its homogenisation is the desired unbounded Aut-invariant quasimorphism on $A \ast B$.

Finally, let us verify that the space of homogeneous Aut-invariant quasimorphisms on $A \ast B$ that vanish on letters is infinite-dimensional. Let $r \in \N$ and let $z_1, \dots, z_r$ be generic tuples. Choose $z_{r+1}$ be a 3-tuple whose entries are distinct non-zero natural numbers and do not appear in any of the $z_i$; then $z_{r+1}$ is generic. It follows from the above construction of the word $w$ for $z_{r+1}$ in both cases that any linear combination of the associated quasimorphisms $f^A_{z_1}+ f^B_{z_1}, \dots, f^A_{z_r}+ f^B_{z_r}$ vanishes on all powers of $w$. It follows that the same holds for any linear combination of their homogenisations $\bar{f}^A_{z_1}+ \bar{f}^B_{z_1}, \dots, \bar{f}^A_{z_r}+ \bar{f}^B_{z_r}$. Thus, $\bar{f}^A_{z_{r+1}}+ \bar{f}^B_{z_{r1}}$ is not contained in the subspace spanned by the first $r$ quasimorphisms. Clearly, the homogenisation of any code quasi-morphism vanishes on all letters of $A \ast B$. Since $r \in \N$ was arbitrary, it follows that the space of homogeneous Aut-invariant quasimorphisms on $A \ast B$ that vanish on letters cannot have finite dimension. 
\end{proof}

\section{Weighted code quasimorphisms}

If one of the factors of a free product $A \ast B$ of freely indecomposable groups happens to be infinite cyclic, the code quasimorphisms above are in general not Aut-invariant since they are not necessarily invariant with respect to transvections. Thus, we need to modify our original construction to deal with infinite cyclic factors. Afterwards we will follow steps similar to the previous section in order to establish their Aut-invariance.

\begin{lemma}\label{reduction letter}
Let $B$ be a non-trivial group and let $w$ be any word in $\Z \ast B$ such that $w$ only contains letters of the same sign from $\Z$ and starts and ends with a non-zero letter from $\Z$. Then its unique reduced form $w^\prime$ starts and ends with a letter from $\Z$ with that given sign. Moreover, the sum over all letters in $w$ belonging to the factor $\Z$ remains the same in its reduced form $w^\prime$.
\end{lemma}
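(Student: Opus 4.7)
The reduction of a word in $\Z \ast B$ to its unique reduced form can be carried out as an iterative process of two elementary moves: (i) replace two consecutive letters from the same factor by their product in that factor, and (ii) delete any identity letter. My plan is to show that two invariants persist through every such move and then to read off both claims of the lemma from them.

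The first invariant is that the sum of the values of all $\Z$-letters in the current word is preserved: a move of type (i) on two $\Z$-letters $m,n$ replaces them by $m+n$; a move of type (i) on two $B$-letters does not touch any $\Z$-letter; and a move of type (ii) either deletes a $B$-identity (leaving the $\Z$-letters untouched) or deletes a zero integer (contributing $0$ to the sum). The second, and more important, invariant is that every $\Z$-letter occurring in any intermediate word is non-zero and of the sign prescribed by the hypothesis: this holds initially by assumption, and is preserved because the only way new $\Z$-letters are produced is by adding two integers of the same sign, which again yields a non-zero integer of that sign.

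From the second invariant one deduces that no type-(ii) move ever touches a $\Z$-letter; in particular the first letter of $w$, a non-zero $\Z$-letter of the given sign, is never deleted. It can only be merged with subsequent $\Z$-letters (once the $B$-letters separating them have cancelled), producing a $\Z$-letter that still has the given sign by the same invariant. Hence the first letter of $w'$ is a $\Z$-letter of the prescribed sign, and the analogous argument handles the last letter. The statement about sums is the first invariant applied to the terminal word $w'$. The main subtlety, and essentially the entire content of the lemma, is the observation that the sign-constancy hypothesis rules out ever producing the integer $0$ by merging; once that is granted, the rest is bookkeeping on elementary moves.
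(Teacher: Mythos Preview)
Your proposal is correct and follows essentially the same approach as the paper: both arguments track elementary reduction moves and use the sign hypothesis to rule out any $\Z$-letter becoming zero, so the endpoint letters can never be deleted and the $\Z$-sum is preserved. Your formulation via the explicit invariant ``every intermediate $\Z$-letter keeps the prescribed sign'' is in fact slightly more careful than the paper's, which only discusses the first and last letters directly.
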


\begin{proof} 
Any word in the free product is brought to its reduced form by successively eliminating trivial letters and replacing two adjacent letters from the same factor by their product in that factor.
The sum of all letters from $\Z$ stays the same because any two adjacent letters of $\Z$ are always replaced by their sum throughout the reduction process. The only way to encounter an elimination of the first letter $a_1 \in \Z$ or the last letter $a_n \in \Z$ during the reduction process would be by the occurrence of $-a_1$ or $-a_n$. This is not possible since $a_1$ and $a_n$ are non-zero and all letters have the same sign by assumption. 
\end{proof}

\begin{defi}[Weighted $\Z$-code]
Let $B$ be freely indecomposable and $B \ncong \Z$. Write $g \in \Z \ast B$ in reduced form. Let $(a_1, \dots , a_k)$ be the $\Z$-tuple of $g$. We define a tuple $(x_1, \dots , x_\ell)$ of non-zero natural numbers as follows. Consider the successive subsequences of maximal length in $(a_1, \dots ,a_k)$ consisting of integers all of the same sign. For the $i$-th such sequence, we define $x_i$ to be the absolute value of the sum of integers in that sequence. We call the tuple $(x_1, \dots , x_\ell)$ the \textit{weighted $\Z$-code} of $g$.
\end{defi}

\begin{example}
Let $B$ be a non-trivial group and let $b_i \in B$ be non-trivial elements. Then the reduced word 
\begin{align*}
w=7b_1 (-2) b_2 (-4) b_3 (-1) b_4 9b_5 2 b_6 (-3)
\end{align*} 
has the $\Z$-tuple $(7,-2.-4,-1,9,2,-3)$ which yields the weighted $\Zcode$ $(7,7,11,3)$.
\end{example}

\begin{defi}[Weighted code quasimorphisms]
Let $z=(n_1, \dots , n_k)$ be a tuple of non-zero natural numbers. We set $\theta^\Z_z \colon \Z \ast B \to \Z_{\geq 0}$ to count the number of \textit{disjoint} appearances of $z$ as a tuple of consecutive numbers inside the weighted $\Zcode$ of $g \in \Z \ast B$. Define the \textit{weighted code quasimorphism}  
\begin{align*}
f^\Z_z \colon \Z \ast B \to \Z \hspace{5mm} \text{by} \hspace{5mm} f^\Z_z(g)= \theta^{\Z}_{z}(g) - \theta^\Z_{z}(g^{-1})
\end{align*}
for all $g \in \Z \ast B$. Note that we again have $\theta^\Z_{z}(g^{-1})= \theta^\Z_{\bar{z}}(g)$ for all $g$. 
\end{defi}

\begin{lemma}\label{Code qm Z}
Let $z$ be a non-empty tuple of non-zero natural numbers. Then the counting function $\theta^\Z_z \colon \Z \ast B \to \Z_{\geq 0}$ satisfies 
\begin{enumerate}
\item $\theta^\Z_z(g^{-1})= \theta^\Z_{\bar{z}}(g)$ for all $g \in \Z \ast B$, 
\item $|\theta^\Z_z(w_1w_2)- \theta^\Z_z(w_1) - \theta^\Z_z(w_2) | \leq 2$ for all reduced words $w_1,w_2$ in  $\Z \ast B$ for which $w_1w_2$ is a reduced word. 
\end{enumerate}
Moreover, $f^\Z_z \colon \Z \ast B \to \Z$ defined for all $g \in \Z \ast B$ by  $f^\Z_z(g)= \theta^{\Z}_{z}(g) - \theta^\Z_{z}(g^{-1})$ is a quasimorphism of defect $D(f^\Z_z) \leq 30$.
\end{lemma}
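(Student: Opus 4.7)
The plan is to verify the three assertions in order. Parts (1) and (2) are the substantive content, and part (3) follows by plugging them into Lemma \ref{building quasimorphisms}.

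For part (1), I would first observe that inverting a reduced word reverses the order of its letters and negates each $\Z$-letter (inverting $B$-letters within $B$ has no bearing on the $\Z$-tuple). Consequently the $\Z$-tuple of $g^{-1}$ is the reverse-negate of the $\Z$-tuple of $g$. Negation flips the common sign of each maximal same-sign block but preserves the partition into such blocks, and the weighted $\Z$-code records only the absolute value of each block sum, so negation has no effect on the numerical code. Thus the weighted $\Z$-code of $g^{-1}$ is literally the reverse of that of $g$, and counting disjoint occurrences of $z$ in a reversed tuple coincides with counting disjoint occurrences of $\bar z$ in the original, yielding $\theta^\Z_z(g^{-1}) = \theta^\Z_{\bar z}(g)$.

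For part (2), the setup guarantees no cancellation at the interface, so the $\Z$-tuple of $w_1 w_2$ is the concatenation of the $\Z$-tuples of $w_1$ and $w_2$. The key step is a case split on the behaviour at the boundary. In subcase (a) the last $\Z$-entry of $w_1$ and the first $\Z$-entry of $w_2$ have opposite signs (or one of the two $\Z$-tuples is empty); then the weighted $\Z$-code of $w_1 w_2$ is exactly the concatenation of those of $w_1$ and $w_2$, and at most one disjoint occurrence of $z$ can straddle the boundary, giving $0 \leq \theta^\Z_z(w_1 w_2) - \theta^\Z_z(w_1) - \theta^\Z_z(w_2) \leq 1$. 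In subcase (b) the two boundary entries share a sign; then the final block of $w_1$ and the initial block of $w_2$ fuse into a single block of $w_1 w_2$, and the corresponding weighted-code entry is the sum of the two original boundary entries. Here, starting from disjoint families realizing $\theta^\Z_z(w_1)$ and $\theta^\Z_z(w_2)$, discarding at most one occurrence using the last entry of $w_1$'s code and at most one using the first entry of $w_2$'s code produces a disjoint family in $w_1 w_2$'s code, to which one may add at most one further occurrence using the fused entry; conversely any disjoint family in $w_1 w_2$ contains at most one occurrence using the fused entry, and removing it splits the rest into disjoint families for $w_1$ and $w_2$. This gives $-2 \leq \theta^\Z_z(w_1 w_2) - \theta^\Z_z(w_1) - \theta^\Z_z(w_2) \leq 1$. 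I expect the bookkeeping in subcase (b) to be the main obstacle, as one must be precise about which occurrences are lost and which are gained when a fusion happens.

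For part (3), the weighted $\Z$-code of a single letter of $\Z \ast B$ is either empty (for non-trivial letters from $B$, as well as identity elements) or a one-entry tuple (for non-trivial $\Z$-letters), so $\theta^\Z_z$ is bounded by $1$ on letters. Combined with part (2), this verifies the hypotheses of Lemma \ref{building quasimorphisms} with constants $B = 1$ and $D = 2$, and Lemma \ref{building quasimorphisms} then yields that $f^\Z_z$ is a quasimorphism of defect at most $12 \cdot 2 + 6 \cdot 1 = 30$.
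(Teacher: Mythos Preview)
Your proposal is correct and follows essentially the same approach as the paper: the same reversal argument for part (1), the same case split on the sign of the boundary $\Z$-entries for part (2) with the same bounds ($[0,1]$ in the non-fusing case and $[-2,1]$ in the fusing case), and the same appeal to Lemma~\ref{building quasimorphisms} with $B=1$, $D=2$ for part (3). Your treatment is in fact slightly more careful in that you explicitly cover the case where one $\Z$-tuple is empty.
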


\begin{proof}
First, recall that the reduced form of $g^{-1}$ is obtained by inverting the reduced form of $g$, which amounts to reversing the order and inverting all letters. This means to obtain the weighted $\Zcode$ of $g^{-1}$ one needs to reverse the one of $g$. Consequently, counting the number of disjoint occurrences of $z$ in the weighted $\Zcode$ of $g^{-1}$ amounts to counting the disjoint occurrences of the reversed tuple $\bar{z}$ in the weighted $\Zcode$ of $g$ itself. This proves the first part. 

Second, let $w_1, w_2$ be written as reduced words with $\Z$-tuples given by $(n_1, \dots, n_k)$ for $w_1$ and $(m_1, \dots, m_\ell)$ for $w_2$ for integers $n_i,m_j$. Let $(x_1, \dots, x_{k^\prime})$ and $(y_1, \dots , y_{\ell^\prime})$ be the weighted $\Zcode$s of $w_1$ and $w_2$. By assumption there is no cancellation or reduction in the product of their reduced expressions representing $w_1w_2$. That means the last letter of $w_1$ and the first letter of $w_2$ belong to different factors. Then  
\begin{align*}
\text{weighted } \Zcode(w_1w_2)= 
\begin{cases}
(x_1, \dots, x_{k^\prime}, y_1, \dots, y_{\ell^\prime}) & \text{if } \sgn(n_k) \neq \sgn(m_1), \\
(x_1, \dots, x_{k^\prime -1}, x_{k^\prime}+y_1,y_2, \dots, y_{\ell^\prime}) & \text{if } \sgn(n_k)=\sgn(m_1).
\end{cases}
\end{align*}

If $\sgn(n_k) \neq \sgn(m_1)$, then $\theta^{\Z}_z(w_1w_2 ) \in \{ \theta^\Z_z(w_1) + \theta^\Z_z(w_2), \theta^\Z_z(w_1) + \theta^\Z_z(w_2)+1 \}$ since at most one of the disjoint occurrences of $z$ can involve numbers that do not lie completely in the weighted $\Zcode$ of either $w_1$ or $w_2$. 

If $\sgn(n_k) = \sgn(m_1)$, then $\theta^\Z_z(w_1w_2 ) \geq \theta^\Z_z(w_1) + \theta^\Z_z(w_2) - 2$ since only one occurrence of $z$ in the weighted $\Zcode$ of $w_1$ and $w_2$ can involve the first or last number respectively. Moreover, if an occurrence of $z$ in the weighted $\Zcode$ of $w_1w_2$ involves $x_{k^\prime}+y_1$, then all other occurrences are fully contained in the weighted $\Zcode$ of either $w_1$ or $w_2$. Thus, $\theta^\Z_z(w_1w_2 ) \leq \theta^\Z_z(w_1) + \theta^\Z_z(w_2) +1 $. 

In both cases we conclude that
\begin{align*}
| \theta^\Z_z(w_1w_2 ) - \theta^\Z_z(w_1) - \theta^C_z(w_2)| \leq 2.
\end{align*}

It follows from Lemma \ref{building quasimorphisms} that $f^\Z_z$ is a quasimorphism of defect at most $30$.
\end{proof}

\begin{lemma}\label{Z-qm}
For all non-empty tuples $z$ the weighted code quasimorphism $f^\Z_z \colon \Z \ast B \to \Z$ is invariant under factor automorphisms and transvections. 
\end{lemma}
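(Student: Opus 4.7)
The plan is to prove invariance of $\theta^\Z_z$ separately under the three relevant generator types (automorphisms of $B$, the unique non-trivial automorphism of $\Z$, and transvections), from which invariance of $f^\Z_z$ follows immediately because every automorphism of $\Z \ast B$ commutes with $g \mapsto g^{-1}$.

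For factor automorphisms the argument is short. Any element of $\Aut(B)$ fixes each $\Z$-letter in the reduced form of $g$, hence fixes the $\Z$-tuple and therefore the weighted $\Z$-code. The non-trivial element of $\Aut(\Z)$ sends each $\Z$-letter $n$ to $-n$, which preserves the partition of the $\Z$-tuple into maximal same-sign subsequences (merely flipping their common sign) and preserves the absolute values of the sums that define the weighted $\Z$-code.

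For transvections I would treat $\tau(s)=sa$, the case $\tau(s)=as$ being symmetric. Writing $g$ in reduced form with $\Z$-tuple $(n_1,\ldots,n_k)$ and intervening $B$-letters $b_i$, $\tau(g)$ is the alternating concatenation of the blocks $\tau(s^{n_i})=(sa)^{n_i}$ with the $b_i$'s. Each $\tau(s^{n})$ is already reduced: for $n>0$ it reads $sasa\cdots sa$ (with $n$ $\Z$-letters, each $+1$, summing to $n$), and for $n<0$ it reads $a^{-1}s^{-1}\cdots a^{-1}s^{-1}$ (with $|n|$ $\Z$-letters, each $-1$, summing to $n$). The core of the proof is a four-case analysis at each junction $\tau(s^{n_i})\,b_i\,\tau(s^{n_{i+1}})$ according to the signs of $n_i$ and $n_{i+1}$. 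When the signs differ, the junction always reduces to a non-trivial $B$-letter (namely $b_i$ itself in one case and $ab_ia^{-1}$ in the other, non-trivial because $b_i \neq 1$) placed between $\Z$-letters of opposite signs, so distinct sign blocks of $g$ give rise to distinct sign blocks in the reduced form of $\tau(g)$. When the signs agree, the $\Z$-letters on both sides of the junction share that common sign, and further reduction may merge some of them whenever $b_i$ equals a specific $B$-element ($a^{-1}$ or $a$ respectively).

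The main obstacle will be controlling these within-block mergers, but that is exactly the situation handled by Lemma \ref{reduction letter}: applied to the sub-word of $\tau(g)$ spanning one maximal sign block of $g$, it ensures that the reduced form of this sub-word still consists solely of $\Z$-letters of that single sign and preserves their total sum $n_{i_{j-1}+1}+\cdots+n_{i_j}$. Combining the boundary argument (different sign blocks of $g$ remain separated in $\tau(g)$) with this block-wise sum preservation, the weighted $\Z$-code of $\tau(g)$ agrees entry-by-entry with that of $g$. Hence $\theta^\Z_z(\tau(g))=\theta^\Z_z(g)$ for every tuple $z$, and applying the same argument to $g^{-1}$ together with $\tau(g^{-1})=\tau(g)^{-1}$ yields $f^\Z_z(\tau(g))=f^\Z_z(g)$, as required.
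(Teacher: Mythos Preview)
Your proposal is correct and follows essentially the same route as the paper: both arguments show that the weighted $\Z$-code itself is preserved by $\tau$, by decomposing the word into maximal same-sign blocks, checking that sign-change junctions reduce to a non-trivial $B$-letter (either $b_i$ or $ab_ia^{-1}$) so that distinct blocks stay separated, and then invoking Lemma~\ref{reduction letter} block-wise to see that the sum of $\Z$-letters in each block is unchanged. Your four-case junction analysis is just a slightly more explicit rendering of the paper's rewriting $\varphi(w)=w_1' b_1' w_2'\cdots w_k'$ with $b_i'\in\{b_i,\,yb_iy^{-1}\}$; one small wording issue is that ``consists solely of $\Z$-letters of that single sign'' should be read as ``its $\Z$-letters all have that sign'' (the reduced sub-word will in general still contain $B$-letters), but with that reading the argument is complete.
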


\begin{proof}
It is immediate from the definition that the weighted $\Zcode$ of any element in the free product is invariant under factor automorphisms. Let $x$ be a generator of the infinite cyclic factor in $\Z \ast B$. Any transvection is defined to be the identity on letters from $B$ and maps $x \to xy$ or $x \to yx$ for some non-trivial element $y \in B$. Let us consider the transvection $\varphi$ uniquely specified by $x \to xy$ and show that the weighted $\Zcode$ of any element in  $\Z \ast B$ is invariant under $\varphi$. Then it immediately follows that $\theta^\Z_z$ and $f^\Z_z$ are invariant under $\varphi$. The argument for transvections of the second kind will follow analogously to the one we present now. 

Let  $w \in \Z \ast B$ be a reduced word such that its weighted $\Zcode$ has length one. This means that all letters from $\Z$ in the reduced expression of $w$ have the same sign and the weighted $\Zcode$ is given by the image of $w$ under the factor projection $\Z \ast B \to \Z$. Note that this factor projection is invariant with respect to $\varphi$ and so the weighted $\Zcode$ of $\varphi(w)$ agrees with the one of $w$. There cannot be any cancellations of letters from $\Z$ occurring. 

Let us do a preliminary calculation to visualise the general case more easily. Let $k,\ell$ be non-zero natural numbers and $b \in B$ non-trivial. Then 
\begin{align*}
\varphi(x^k b x^{-\ell}) & = \varphi(x)^k b \varphi(x)^{-\ell} = (xy)^k b (y^{-1} x^{-1} )^\ell = xy \dots xyxyby^{-1} x^{-1} y^{-1} x^{-1} \dots y^{-1} x^{-1}, \\
\varphi(x^{-k} b x^{\ell}) & = \varphi(x)^{-k} b \varphi(x)^{\ell} = (y^{-1} x^{-1} )^k b (xy)^\ell = y^{-1} x^{-1} \dots y^{-1} x^{-1} b xy \dots xy.
\end{align*}
This shows that the letter from $B$ separating the positive and negative powers of $x$ either remains $b$ or is a conjugate of $b$ in $B$ after applying $\varphi$.

Let $w \in \Z \ast B$ be a reduced word with weighted $\Zcode$ of length $k \geq 2$. In  $w$ we formally gather all consecutive occurrences of powers of $x$ of the same sign and call these sub-words $w_i$ for $i= \{1, \dots, k \}$. That is, we write the reduced word $w$ uniquely as a product of reduced words as 
\begin{align*}
w=w_1 b_1 w_2 b_2 \dots w_{k-1} b_{k-1}w_k,
\end{align*}
where the $b_i \in B$ are non-trivial and the $w_i$ are of of maximal length such that all letters from $\Z$ inside any $w_i$ have the same sign. Moreover, in this decomposition $w_1$ ends with a letter from $\Z$, $w_n$ starts with a letter from $\Z$ and all other $w_i$ start and end with letters from $\Z$. By the maximality of $w_i$ all letters from $\Z$ occurring in $w_i$ have different signs from the ones occurring in $w_{i+1}$ for all $i$.

We apply $\varphi$ to $w$ and obtain an a priori not necessarily reduced word, which we rewrite in the previous block form as
\begin{align*}
\varphi(w)= \varphi(w_1) b_1 \varphi(w_2) b_2 \dots \varphi(w_{k-1})b_{k-1} \varphi(w_k) = w_1^\prime b_1^\prime w_2^\prime b_2^\prime \dots w_{k-1}^\prime b_{k-1}^\prime w_k^\prime, 
\end{align*}
where $b_i^\prime=yby^{-1}$ if the letters from $\Z$ change sign from positive to negative at $b_i$ and $b_i^\prime= b_i$ if they change from negative to positive. Moreover, all letters from $\Z$ inside any $w_i^\prime$ have the same sign again, $w_1^\prime$ ends with a letter from $\Z$, $w_n^\prime$ starts with a letter from $\Z$ and all other $w_i^\prime$ start and end with letters from $\Z$.

We observe that when bringing $\varphi(w)$ to its reduced form there cannot be any cancellations of the letters $b_i^\prime$. This is because by Lemma \ref{reduction letter} the letters that are adjacent to $b_i$ will always remain letters from $\Z$ after the reduction procedure of all $w_i^\prime$. Indeed, replacing all $w_i^\prime$ by their reduced forms $w_i^{\prime \prime}$ we see that the product 
\begin{align*}
w^{\prime \prime} = w_1^{\prime \prime} b_1^\prime w_2^{\prime \prime} b_2 \dots w_{k-1}^{\prime \prime} b_{k-1}^\prime w_k^{\prime \prime}
\end{align*}
is the reduced representative of $\varphi(w)$ since the letters adjacent to the $b_i^\prime$ are always letters from $\Z$. Consequently, no cancellations in between letters of different signs from $\Z$ can occur when bringing $\varphi(w)$ to its reduced form. The reduced words $w_i^{ \prime \prime}$ have the same weighted $\Zcode$ as the original $w_i$ for all $i$. Therefore, the weighted $\Zcode$ of $\varphi(w)$ agrees with the weighted $\Zcode$ of $w$.
\end{proof}

\begin{theorem} \label{freeprod with Z}
Let $B$ be a freely indecomposable group which is not infinite cyclic. Then for any generic tuple of natural numbers $z$ the homogenisation $\bar{f}^\Z_z \colon \Z \ast B \to \R$ of the quasimorphism $f^\Z_z$ is an unbounded Aut-invariant quasimorphism on $\Z \ast B$. Moreover, the space of homogeneous Aut-invariant quasimorphisms on $\Z \ast B$ that vanish on letters has infinite dimension. 
\end{theorem}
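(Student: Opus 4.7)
The plan is to mirror the structure of the proof of Theorem \ref{freeprod no Z prop} with the obvious substitutions. Since $B$ is freely indecomposable and not infinite cyclic, in particular $\Z \ncong B$, so Lemma \ref{outrepresentatives} tells us that $\Out(\Z \ast B)$ is generated by the images of $\Aut(\Z)$, $\Aut(B)$ and by the transvections (no swap is required). By Lemma \ref{Z-qm} the quasimorphism $f^\Z_z$ is invariant under factor automorphisms and transvections, hence under a full set of representatives of $\Out(\Z \ast B)$. Applying Lemma \ref{homogenisation Aut invariant} to $f^\Z_z$ then yields that the homogenisation $\bar{f}^\Z_z$ is Aut-invariant. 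By Lemma \ref{HomogenisationQM} unboundedness of $\bar{f}^\Z_z$ is equivalent to unboundedness of $f^\Z_z$ itself.

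To establish unboundedness I would exhibit an element $w \in \Z \ast B$ on which $f^\Z_z$ grows linearly in powers. Let $x$ be a generator of the infinite cyclic factor, pick any non-trivial $b \in B$, write $z = (n_1, \dots, n_k)$ and choose $m \in \N$ distinct from all $n_i$. Set
\begin{align*}
w_0 = x^{n_1} b x^{-n_2} b x^{n_3} b x^{-n_4} b \cdots x^{(-1)^{k+1} n_k} b,
\end{align*}
which is already reduced, alternates signs in its $\Z$-letters, and begins with a power of $x$ while ending with $b$. Put $w = w_0$ if $k$ is even and $w = w_0 x^{-m} b$ if $k$ is odd. In either case the weighted $\Zcode$ of $w$ equals $z$ or $(z, m)$ respectively. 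Because $w$ starts with an $\Z$-letter and ends with a $B$-letter, $w^\ell$ is already in reduced form for every $\ell$, and by Lemma \ref{reduction letter} no sign-merging happens at the concatenation boundaries; the weighted $\Zcode$ of $w^\ell$ is therefore the $\ell$-fold concatenation of $z$ (resp.\ of $(z,m)$). Genericity of $z$ and the choice of $m$ together ensure that $\bar z$ never appears as a consecutive substring, so $\theta^\Z_{\bar z}(w^\ell) = 0$, while the obvious $\ell$ disjoint copies give $\theta^\Z_z(w^\ell) = \ell$. Hence $f^\Z_z(w^\ell) = \ell$ and $f^\Z_z$ is unbounded.

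For the infinite-dimensional statement I would repeat the argument at the end of Theorem \ref{freeprod no Z prop} verbatim: given generic tuples $z_1, \dots, z_r$, choose a 3-tuple $z_{r+1}$ of pairwise distinct natural numbers none of which occurs in any $z_i$. Then $z_{r+1}$ is generic, and the word $w$ constructed above for $z_{r+1}$ has weighted $\Zcode$ consisting entirely of entries disjoint from all the $z_i$, so $f^\Z_{z_i}(w^\ell) = 0$ for $i \leq r$ while $f^\Z_{z_{r+1}}(w^\ell) = \ell$. Passing to homogenisations preserves this separation property, so $\bar f^\Z_{z_{r+1}}$ lies outside the span of $\bar f^\Z_{z_1}, \dots, \bar f^\Z_{z_r}$. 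Since the homogenisation of any $f^\Z_z$ visibly vanishes on letters (a letter has weighted $\Zcode$ of length at most one, so all its powers have weighted $\Zcode$ of length at most one, which cannot contain the non-empty tuple $z$), this gives the required infinite-dimensional subspace.

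The only point that needs care rather than mere transcription from Theorem \ref{freeprod no Z prop} is the verification that the weighted $\Zcode$ of $w^\ell$ really is the naive concatenation; this is where Lemma \ref{reduction letter} is essential, since the weighted code is insensitive to subdivision of a same-sign block but would collapse if two consecutive $\Z$-blocks happened to share a sign at the boundary. The alternating-sign construction above is specifically arranged so that the last $\Z$-letter in $w$ has opposite sign to the first $\Z$-letter of the next copy, which rules out any such collapse and makes the counting transparent.
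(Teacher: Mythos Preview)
Your proposal is correct and follows essentially the same route as the paper: the same lemmas are invoked for Aut-invariance, the same alternating-sign element $w$ is built to witness unboundedness, and the same linear-independence argument is repeated. The only cosmetic difference is that the paper does not appeal to Lemma~\ref{reduction letter} here (it is not needed, since $w^\ell$ is already reduced and the boundary signs differ by construction, exactly as you note in your final paragraph).
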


\begin{proof}
By Lemma \ref{Code qm Z} $f^\Z_z$ is a quasimorphism, which is invariant under factor automorphisms and transvections according to the previous Lemma \ref{Z-qm}. Images of these automorphisms generate the outer automorphism group $\Out(\Z \ast B)$ by Lemma \ref{outrepresentatives} . Thus, $f^\Z_z$ is invariant under a full set of representatives of all outer automorphisms and so by Lemma \ref{homogenisation Aut invariant} the homogenisation $\bar{f}^\Z_z$ is invariant under $\Aut(\Z \ast B)$. It remains to check that it is unbounded, which is equivalent to $f^\Z_z$ itself being unbounded. 

Since $z$ is generic, $z=(n_1, \dots , n_k)$ for some $k \geq 3$ where all $n_i \in \N$ are non-zero. Let $b \in B$ be non-trivial and $m$ a strictly positive integer number distinct from all $n_i$. Set $w \in \Z \ast B$ to be 
\begin{align*}
w = 
\begin{cases}
n_1 b (-n_2) b n_3 b (-n_4) \dots b (-n_k) b & \text{if } k \text{ is even},\\
n_1 b (-n_2) b n_3 b (-n_4) \dots b (-n_k)b mb & \text{if } k \text{ is odd}.
\end{cases}
\end{align*}
The weighted $\Zcode$ of $w$ is given by 
\begin{align*}
\text{weighted } \Zcode(w)= 
\begin{cases}
(n_1, \dots, n_k)=z & \text{if } k \text{ is even},\\
(n_1, \dots, n_k, m ) = (z,m) & \text{if } k \text{ is odd}.
\end{cases}
\end{align*}

Since $w$ starts and ends with letters belonging to different factors, the reduced expression of $w^\ell$ is the $\ell$-fold product of the word $w$ for all $\ell \in \N$. Moreover, since the first and last letter from $\Z$ in $w$ have different signs the weighted $\Zcode$ of $w^\ell$ is 
\begin{align*}
\text{weighted } \Zcode(w^\ell)= 
\begin{cases}
(z, z, \dots, z) & \text{if } k \text{ is even},\\
(z, m, z, m, \dots, z, m) & \text{if } k \text{ is odd}.
\end{cases}
\end{align*}
Since $m$ is distinct from all $n_i$, $m$ cannot appear in any occurrence of $z$ or $\bar{z}$ inside the weighted $\Zcode$ of $w^\ell$. So $\theta^\Z_z(w^\ell)= \ell$, whereas $\theta^A_{\bar{z}}(w^\ell)=0$ since $\bar{z}$ is $z$ is generic. Consequently, 
\begin{align*}
f^\Z_z(w^\ell)= \theta^\Z_z(w^\ell)- \theta^\Z_{\bar{z}}(w^\ell)= \ell,
\end{align*}
which shows that $f^\Z_z$ is unbounded.

Finally, let us verify that the space of homogeneous Aut-invariant quasimorphisms on $\Z \ast B$ that vanish on letters is infinite-dimensional. Let $r \in \N$ and let $z_1, \dots, z_r$ be generic tuples. Choose $z_{r+1}$ be a 3-tuple whose entries are distinct non-zero natural numbers and do not appear in any of the $z_i$; then $z_{r+1}$ is generic. It follows from the above construction of the word $w$ for $z_{r+1}$ that any linear combination of $f^\Z_{z_1}, \dots, f^\Z_{z_r}$ vanishes on all powers of this $w$. It follows that the same holds for any linear combination of their homogenisations $\bar{f}^\Z_{z_1}, \dots, \bar{f}^\Z_{z_r}$. Thus,  $\bar{f}^\Z_{z_{r+1}}$ is not contained in the subspace spanned by the first $r$ quasimorphisms.  Clearly, the homogenisation of any weighted code quasi-morphism vanishes on all letters of $\Z \ast B$. Since, $r \in \N$ was arbitrary, it follows that the space of homogeneous Aut-invariant quasimorphisms on $\Z \ast B$ that vanish on letters cannot have finite dimension. 
\end{proof}

\section{Applications of code quasimorphisms}

\begin{proof}[Proof of Theorem \ref{T1}]
By \cite[Theorem 2]{BraMarc} the space of homogeneous Aut-invariant quasimorphisms on $\Z \ast \Z$ is infinite-dimensional. Inverting both generators of the factors defines an automorphism which inverts all letters in $\Z \ast \Z$. So any homogeneous Aut-invariant quasimorphism on $\Z \ast \Z$ vanishes on all letters. For all other free products of two factors Proposition \ref{freeprod no Z prop} and Proposition \ref{freeprod with Z} imply the existence of infinitely many linearly independent homogeneous Aut-invariant quasimorphisms, all of which vanish on letters. 
\end{proof}

\begin{proof}[Proof of Corollary \ref{MainCor}]
Let $A \ast B$ be a free product of two freely indecomposable groups which is not the infinite dihedral group. By Theorem \ref{T1} there exist unbounded Aut-invariant quasimorphisms on $A \ast B$ that are bounded on all letters. Since $A \ast B$ is generated by letters, the result follows from Lemma \ref{Autqm implies Autnorm}.
\end{proof}

\begin{remark}
If neither $A$ nor $B$ is infinite cyclic, then Corollary \ref{MainCor} can also be deduced from the result given in \cite[Lemma 4.4]{Marcinkowski} together with the explicit description of the automorphism group given in Section 3. 
\end{remark}

\begin{cor}\label{more factors qm}
Let $G = \ast_{i \in I} G_i$ be a free product of finitely many freely indecomposable groups $G_i$. Assume there exist free factors $G_j$ and $G_k$ with $j \neq k$ such that no free factors $G_i$ for $i \notin \{ j,k \}$ is isomorphic to $G_j$ or $G_k$ or is infinite cyclic. Moreover, assume that $G_j$, $G_k$ are not both equal to $\Z/2$. Then any unbounded Aut-invariant quasimorphism on $G_j \ast G_k$ gives rise to an unbounded Aut-invariant quasimorphism on $G$. In particular, the space of homogeneous Aut-invariant quasimorphisms on $G$ is infinite-dimensional. 
\end{cor}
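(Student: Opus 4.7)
The plan is to apply Lemma \ref{Gen6} to a carefully chosen characteristic subgroup. Define $H$ to be the normal closure in $G$ of $\bigcup_{i \notin \{j,k\}} G_i$. By the universal property of the free product, the quotient $G/H$ is isomorphic to $G_j \ast G_k$. Granting that $H$ is characteristic in $G$, any unbounded Aut-invariant quasimorphism on $G_j \ast G_k$ then pulls back along the quotient $G \twoheadrightarrow G/H$ to an unbounded Aut-invariant quasimorphism on $G$ via Lemma \ref{Gen6}, with linear independence preserved.

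The core of the proof is therefore to verify that $H$ is invariant under every automorphism of $G$. Invoking the Fouxe-Rabinovitch description of $\Aut(G)$ for free products of freely indecomposable factors (the natural extension of the two-factor discussion from Section 3), $\Aut(G)$ is generated by factor automorphisms, partial conjugations, swap automorphisms between isomorphic factors, and transvections for infinite cyclic factors. I would check each generator type preserves $H$. Factor automorphisms act within a single factor, so they either preserve one of the factor subgroups of $H$ or act as the identity on the generators of $H$. Partial conjugations send each $G_\ell$ either to itself or to a conjugate $g G_\ell g^{-1}$ in $G$, and the latter remains in $H$ whenever $\ell \notin \{j,k\}$ because $H$ is normal.

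The swap and transvection cases are where the hypotheses of the corollary are essential. Since no $G_i$ with $i \notin \{j,k\}$ is isomorphic to $G_j$ or $G_k$, any swap automorphism either exchanges $G_j$ with $G_k$ or permutes within the family $\{G_i : i \notin \{j,k\}\}$; in both situations the set of normal generators of $H$ is preserved setwise. Likewise, since no $G_i$ with $i \notin \{j,k\}$ is infinite cyclic, every transvection acts on the generator of $G_j$ or $G_k$ and is the identity on all other factors, so it fixes the generators of $H$ pointwise and preserves $H$. The main obstacle is precisely this transvection case: if some $G_i$ with $i \notin \{j,k\}$ were infinite cyclic with generator $s$, a transvection could send $s$ to $as$ with $a \in G_j$, whose image in $G/H \cong G_j \ast G_k$ is nontrivial while $s$ itself maps to the identity, contradicting invariance of $H$.

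Once $H$ is known to be characteristic, the final claim follows immediately. By hypothesis, $G_j$ and $G_k$ are freely indecomposable and not both equal to $\Z/2$, so $G_j \ast G_k$ is not the infinite dihedral group. Theorem \ref{T1} supplies infinitely many linearly independent homogeneous Aut-invariant quasimorphisms on $G_j \ast G_k$, and pulling these back along $G \twoheadrightarrow G_j \ast G_k$ via Lemma \ref{Gen6} produces an infinite-dimensional space of homogeneous Aut-invariant quasimorphisms on $G$.
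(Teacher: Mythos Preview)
Your proposal is correct and follows essentially the same route as the paper: both arguments reduce to showing that the kernel of the projection $G \to G_j \ast G_k$ is characteristic by checking the Fouxe-Rabinovitch/Gilbert generators of $\Aut(G)$ one type at a time, and then invoke Theorem~\ref{T1} together with Lemma~\ref{Gen6}. The paper phrases the verification as ``each generator of $\Aut(G)$ descends to an automorphism of $G_j \ast G_k$'' rather than ``each generator preserves $H$'', but these are equivalent, and the case analysis (factor automorphisms, partial conjugations, swaps, transvections) is the same.
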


\begin{proof}
We claim that the projection $p \colon G \rightarrow G_j \ast G_k$ is Aut-equivariant, i.e. any automorphism of $G$ descends via $p$ to an automorphism of $G_j \ast G_k$. This is equivalent to $\ker(p)$ being a characteristic subgroup of $G$. Once this is established, we apply Theorem \ref{T1} to $G_j \ast G_k$ and conclude the proof by applying Lemma \ref{Gen6}.

Let us now show that any automorphism of $G$ indeed descends to $G_j \ast G_k$. By \cite{Gilbert} $\Aut(G)$ is generated by factor automorphisms, swap automorphisms, partial conjugations and transvections since inner automorphisms can be written as products of factor automorphisms and partial conjugations. It is clear that all factor automorphisms and all partial conjugations of $G$ descend to automorphisms of $G_j \ast G_k$ via $p$. By our assumption there are no swap automorphisms permuting any other free factors in $G$ with $G_j$ and $G_k$, so these descend to the quotient as well. It only remains to check the transvections if $G_j$ or $G_k$ happen to be infinite cyclic.  So let $G_j$ be infinite cyclic generated by $x$ and let $a$ be a letter from a different factor $G_\ell$. If $\ell=k$, then any transvection $\varphi_a$ defined by $\varphi_a(x)=ax$ or $\varphi_a(x)=xa$ descends via $p$ to the same transvection on $G_j \ast G_k$. If $\ell \neq k$, any such transvection descends to the identity on $G_j \ast G_k$. In particular, it always descends via $p$. Since a generating set of $\Aut(G)$ descends to automorphisms of the quotient $G_j \ast G_k$, any element of $\Aut(G)$ does so. Consequently, the map $p$ is Aut-equivariant. 
\end{proof}

\begin{cor} \label{extensionchar}
Let $H \to G \to A \ast B$ be an extension of a free product of freely indecomposable groups $A$ and $B$ by a group $H$. Assume that $H$ is a characteristic subgroup of $G$ and $A \ast B$ is not the infinite dihedral group. Then the space of homogeneous Aut-invariant quasimorphisms on $G$ is infinite-dimensional.   
\end{cor}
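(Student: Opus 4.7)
The plan is to reduce directly to Theorem \ref{T1} via the characteristic subgroup machinery already set up in Lemma \ref{Gen6}. First, I note that since $H$ is a characteristic subgroup of $G$, the quotient projection $p \colon G \to G/H$ is well-defined and the quotient is identified with $A \ast B$. Any automorphism of $G$ descends through $p$ to an automorphism of $G/H \cong A \ast B$, which is the key point making $p$ \emph{Aut-equivariant}.

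Next, since $A \ast B$ is a free product of two freely indecomposable groups and is not the infinite dihedral group, Theorem \ref{T1} applies and gives an infinite-dimensional space of homogeneous Aut-invariant quasimorphisms on $A \ast B$. Concretely, I can fix an infinite linearly independent family $\{\psi_n\}_{n \in \N}$ of such homogeneous Aut-invariant quasimorphisms, each of which is unbounded.

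Now I apply Lemma \ref{Gen6} to the characteristic subgroup $H \leq G$: for each $n$, the composition $\psi_n \circ p \colon G \to \R$ is an unbounded Aut-invariant quasimorphism on $G$. One easily verifies that $\psi_n \circ p$ is still homogeneous, because $p$ is a homomorphism and $\psi_n$ is homogeneous. Moreover, the second part of Lemma \ref{Gen6} guarantees that linear independence is preserved under pullback by $p$, since $p$ is surjective: any relation $\sum \lambda_n (\psi_n \circ p) = 0$ forces $\sum \lambda_n \psi_n = 0$ on $A \ast B$ and hence $\lambda_n = 0$ for all $n$. Therefore $\{\psi_n \circ p\}_{n \in \N}$ is an infinite linearly independent family of homogeneous Aut-invariant quasimorphisms on $G$, completing the proof.

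There is essentially no obstacle here: all the real work has been packaged into Theorem \ref{T1} and Lemma \ref{Gen6}, and the corollary is simply the assembly of these two ingredients using the hypothesis that $H$ is characteristic.
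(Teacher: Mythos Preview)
Your proof is correct and takes essentially the same approach as the paper: apply Theorem~\ref{T1} to $A \ast B$ and then invoke Lemma~\ref{Gen6} for the characteristic subgroup $H$. The paper's own proof is a two-line version of exactly this argument; you have simply spelled out the details (Aut-equivariance of $p$, preservation of homogeneity, preservation of linear independence) that the paper leaves implicit.
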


\begin{proof}
The space of homogeneous Aut-invariant quasimorphisms on $A \ast B$ is infinite-dimensional by Theorem \ref{T1}. Therefore, the result follows from Lemma \ref{Gen6}.
\end{proof}

\begin{cor}\label{amalgam}
Let $G_1 \ast_H G_2$ be a free product of groups $G_1,G_2$ amalgamated over a common subgroup $H$ which is proper and central in both $G_1$ and  $G_2$. If $G_1/H$ and $G_2/H$ are freely indecomposable and not both equal to $\Z/2$, the space of homogeneous Aut-invariant quasimorphisms on $G_1 \ast_H G_2$ is infinite-dimensional. 
\end{cor}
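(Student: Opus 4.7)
The plan is to exhibit $H$ as a characteristic subgroup of $G := G_1 \ast_H G_2$ whose quotient is a free product satisfying the hypotheses of Theorem \ref{T1}, and then invoke Lemma \ref{Gen6}.

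First, I would identify the quotient. Since $H$ is central in each $G_i$, and $G$ is generated by $G_1 \cup G_2$, every element of $H$ commutes with all generators, so $H \subseteq Z(G)$. In particular $H$ is normal in $G$. The universal properties of the amalgamated product and of the ordinary free product then combine to yield an isomorphism
\begin{align*}
G/H \;\cong\; (G_1/H) \ast (G_2/H),
\end{align*}
since the maps $G_i \to G \to G/H$ both factor through $G_i/H$ and agree on (the image of) $H$, while conversely the composition $G \to G/H \to (G_1/H)\ast(G_2/H)$ obviously vanishes on $H$.

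Next, I would verify that $H$ is in fact characteristic in $G$ by showing $H = Z(G)$. Since $H$ is proper in each $G_i$, both quotients $G_1/H$ and $G_2/H$ are non-trivial, so $(G_1/H) \ast (G_2/H)$ is a free product of two non-trivial groups. Any such free product has trivial center, as recalled in Section 3. The quotient map $G \to G/H$ sends $Z(G)$ into $Z(G/H) = \{1\}$, whence $Z(G) \subseteq H$. Combined with the reverse inclusion established above, this gives $H = Z(G)$, and the center is always a characteristic subgroup.

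Finally, I would conclude by applying Theorem \ref{T1} to $G_1/H \ast G_2/H$: the quotients $G_i/H$ are freely indecomposable by hypothesis and not both isomorphic to $\Z/2$, so the product is not the infinite dihedral group and therefore admits infinitely many linearly independent homogeneous Aut-invariant quasimorphisms. By Lemma \ref{Gen6}, pulling these back along the projection $G \to G/H$ yields an infinite-dimensional space of homogeneous Aut-invariant quasimorphisms on $G$.

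The only potentially subtle step is identifying $H$ with $Z(G)$; once that is in place the rest is a mechanical combination of the quotient identification with Lemma \ref{Gen6} and Theorem \ref{T1}. The hypotheses that $H$ is proper and that the quotients are not both $\Z/2$ are precisely what is needed to make the centerless-free-product argument and the application of Theorem \ref{T1} work.
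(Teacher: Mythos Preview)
Your proof is correct and follows essentially the same approach as the paper: identify $H$ with the center of $G_1 \ast_H G_2$ (hence characteristic), recognise the quotient as $(G_1/H) \ast (G_2/H)$, and then apply Theorem \ref{T1} together with Lemma \ref{Gen6}. The paper routes the last step through Corollary \ref{extensionchar}, which is precisely that combination, and is terser about why $H$ equals the center; your version spells out the inclusion $Z(G) \subseteq H$ via the centerlessness of the quotient free product, which is a welcome clarification.
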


\begin{proof}
By assumption $H \neq G_1$ and $H \neq G_2$ and so $H$ equals the center of $G_1 \ast_H G_2$. As such it is a characteristic subgroup of $G_1 \ast_H G_2$. Furthermore, $\frac{G_1 \ast_H G_2}{H} \cong \frac{G_1}{H} \ast \frac{G_2}{H}$. By assumption $\frac{G_1}{H} \ast \frac{G_2}{H}$ is not isomorphic to the infinite dihedral group and the result follows from Corollary \ref{extensionchar} above.
\end{proof}

\begin{example}
For $q \geq 3$, the Hecke groups $H_q \cong \Z/2 \ast \Z/q$ admit infinitely many linearly independent homogeneous Aut-invariant quasimorphisms by Theorem \ref{T1}. 
\end{example}

\begin{example}
By Corollary \ref{amalgam} the space of homogeneous Aut-invariant quasimorphisms on $\SL(2,\Z)$ is infinite-dimensional, since $\SL(2,\Z)$ is the amalgamated product $\Z/4 \ast_{\Z/2} \Z/6$.   
\end{example}

\begin{example}
The braid group $B_3$ admits infinitely many linearly independent homogeneous Aut-invariant quasimorphism as well by Corollary \ref{extensionchar}. Indeed, $B_3$ is the universal central extension of $\PSL(2,\Z)= \Z/2 \ast \Z/3$ by $\Z$. 
\end{example}

\begin{example}
Let $G_{p,q} = \Z \ast_\Z \Z$ be the free product of two copies of the integers amalgamated over inclusions $\iota_1 , \iota_2 \colon \Z \to \Z$ which are multiplication by $p$ and $q$. For coprime choices of $p$ and $q$ these are the so called knot groups $K_{p,q}$ arising as the fundamental group of the complement of torus knots. Then $G_{p,q}$ admits infinitely many linearly independent homogeneous Aut-invariant quasimorphisms if $\min\{ |p|, |q| \} \geq 2$ and $\max \{ |p|,|q| \} \geq 3$. We have seen in Example \ref{Kleinbottle} that this is no longer true for $p=q=2$.
\end{example}

\begin{example}
$B_3 \ast_\Z B_3$, the free product of $B_3$ with itself amalgamated over their common center generated by the Garside element, admits infinitely many linearly independent unbounded Aut-invariant quasimorphisms. To prove this we cannot apply Corollary \ref{amalgam} directly since $B_3/\Z$ is not freely indecomposable. The center of $B_3 \ast_\Z B_3$ is again generated by the Garside element of each of the factors. This fits into the short exact sequence 
\begin{align*}
\Z \to B_3 \ast_\Z B_3 \to (B_3/\Z) \ast (B_3/\Z).
\end{align*}
Finally, $(B_3/\Z) \ast (B_3/\Z)= \PSL(2,\Z) \ast \PSL(2,\Z)= \Z/2 \ast \Z/3 \ast \Z/2 \ast \Z/3$. So Corollary \ref{more factors qm} applies with $G_j=G_k=\Z/3$. Then the statement for $B_3 \ast_\Z B_3$ follows from Lemma \ref{Gen6}.
\end{example}

\section{Aut-invariant stable commutator length}

For any group $G$ let $\cl_G$ denote the commutator length on $[G,G]$, which is defined to be the minimal number of commutators required to write a given element of the commutator subgroup. Let $\scl_G(x) = \lim_n \frac{\cl(x^n)}{n}$ denote the stable commutator length of $x \in [G,G]$. It shares a deep relationship with quasimorphisms on $G$ through the so called Bavard duality \cite{Calegari}. We now define the Aut-invariant (stable) commutator length; this is a special case of the $\hat{G}$-invariant (stable) commutator length defined in \cite{KK}. 

\begin{defi}
Let $G \leq \hat{G}$ be a normal subgroup. Consider the subgroup $[\hat{G},G] \leq G$ generated by commutators of the form $[F,g]$ and their inverses where $F \in \hat{G}$ and $g \in G$. Then for $x \in [\hat{G},G]$ the \textit{$\hat{G}$-invariant commutator length} $\cl_{\hat{G},G}(x)$ is defined to be the minimal length of an expression of $x$ as a product of commutators $[F,g]$ and their inverses where $F \in \hat{G}$ and $g \in G$. The  \textit{$\hat{G}$-invariant stable commutator length} $\scl_{\hat{G},G}$ for $x \in [\hat{G},G]$ is defined by $\scl_{\hat{G},G}(x) = \lim_n \frac{\cl_{\hat{G}G}(x^n)}{n}$. 

Given any group $G$, its inner automorphism group $\Inn(G)$ is a normal subgroup of $\Aut(G)$ and so the above definition applies to $\Inn(G)$. If $G$ has trivial center, $G$ can be identified with $\Inn(G)$. In this case we simplify the notation by denoting the \textit{Aut(G)-invariant commutator length} simply as $\cl_{\Aut}$ and the \textit{Aut(G)-invariant stable commutator length} simply as $\scl_{\Aut}$.
\end{defi}

Setting $\hat{G}=\Aut(G)$ the following lemma is proven in \cite[Lemma 2.1]{KK}.

\begin{lemma} \label{Kawasaki Lemma}
Let $G$ be a group with trivial center so that $G= \Inn(G)$. Let $\phi$ be an homogeneous Aut-invariant quasimorphism on $G$. Then any $x \in [\Aut(G),G]\leq G$ satisfies 
\begin{align*}
\scl_{\Aut}(x) \geq \frac{1}{2}\frac{|\phi(x)|}{D(\phi)}. 
\end{align*} \qed
\end{lemma}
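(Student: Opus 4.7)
The plan is to mimic the classical quasimorphism bound for ordinary stable commutator length, adapted to the Aut-invariant setting. The key is to reinterpret each generator $[F,g]$ of $[\Aut(G), G]$ as an element of $G$ for which the defect inequality, combined with Aut-invariance and homogeneity, gives a uniform bound.

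\textbf{Step 1 (identify the commutators in $G$).} Since $G$ has trivial center we identify $g \in G$ with the inner automorphism $\iota_g \in \Inn(G)$. For $F \in \Aut(G)$ the identity $F \iota_g F^{-1} = \iota_{F(g)}$ gives $FgF^{-1} = F(g)$ inside $\Aut(G)$, so the commutator can be written as
\begin{align*}
[F,g] = F(g)\, g^{-1} \in G.
\end{align*}

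\textbf{Step 2 (bound $\phi$ on a single commutator).} Using the defect inequality together with Aut-invariance and homogeneity of $\phi$ (which gives $\phi(F(g)) = \phi(g)$ and $\phi(g^{-1}) = -\phi(g)$), I obtain
\begin{align*}
|\phi([F,g])| = |\phi(F(g)g^{-1}) - \phi(F(g)) - \phi(g^{-1})| \leq D(\phi),
\end{align*}
and the same bound holds for $[F,g]^{-1}$ since $\phi$ is homogeneous.

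\textbf{Step 3 (bound $\phi$ on an arbitrary product).} If $x = c_1 \cdots c_n$ is a product of $n$ generators of $[\Aut(G), G]$ or their inverses, the defect inequality iterated $n-1$ times yields
\begin{align*}
|\phi(x)| \leq \sum_{i=1}^{n} |\phi(c_i)| + (n-1) D(\phi) \leq (2n-1) D(\phi).
\end{align*}
Taking the infimum over such expressions gives $|\phi(x)| \leq (2\cl_{\Aut}(x) - 1) D(\phi)$, so $\cl_{\Aut}(x) \geq \tfrac{1}{2}\bigl(|\phi(x)|/D(\phi) + 1\bigr)$.

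\textbf{Step 4 (stabilise).} Apply the inequality to $x^k$ for each $k$, use homogeneity $\phi(x^k) = k \phi(x)$, divide by $k$, and send $k \to \infty$:
\begin{align*}
\scl_{\Aut}(x) = \lim_{k \to \infty} \frac{\cl_{\Aut}(x^k)}{k} \geq \lim_{k \to \infty} \frac{k|\phi(x)| + D(\phi)}{2k D(\phi)} = \frac{1}{2}\frac{|\phi(x)|}{D(\phi)}.
\end{align*}

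The only delicate point is Step 1, where one has to be careful about the identification $G = \Inn(G)$ and about checking that conjugation of an inner automorphism by an outer one lands on $F(g) \in G$ (rather than on its inverse or some twist). Once this is pinned down, Steps 2--4 are the standard Bavard-style computation, so no real obstacle remains beyond this bookkeeping.
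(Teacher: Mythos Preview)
Your proof is correct; the paper itself does not prove this lemma but simply cites \cite[Lemma 2.1]{KK}, so there is no in-paper argument to compare against. Your Steps 1--4 are exactly the standard Bavard-type computation adapted to the $\Aut$-invariant setting (the identification $[F,g]=F(g)g^{-1}$ in Step~1 is correct, and the rest follows routinely), which is presumably what the cited reference does as well.
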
 

In fact, according to \cite[Theorem 1.3]{KK} $\hat{G}$-invariant quasimorphisms satisfy an analogue of the Bavard duality theorem if $[\hat{G},G] =G$. All free products $A \ast B$ of freely indecomposable groups $A$ and $B$ have trivial center and so the notions $\cl_{\Aut}$ and $\scl_{\Aut}$ apply. However, free products often fail to satisfy $[\Aut(G),G] =G$. We will use a constructive approach rather than relying on an invariant analogue of Bavard's duality in the following.

\begin{example}
For $D_\infty = \Z/2 \ast \Z/2$ it holds that $\scl_{\Aut} \equiv 0$. To see this denote the generators of the $\Z/2$ factors by $a$ and $b$ and let $s$ be the automorphism of $D_\infty$ interchanging $a$ and $b$. Then we calculate $[s,a]=sas^{-1}a^{-1}=s(a)a^{-1}=ba$ and $[s,b]=ab=(ba)^{-1}$. Since the total number of letters appearing in any expression of the form $[f,x]$ is always even where $f \in \Aut(D_\infty)$ and $x \in D_\infty$, it holds that $[\Aut(D_\infty),D_\infty)] \cong \Z$ generated by $ba$. In fact, any power $(ba)^k$ for $k \in \Z$ can be written as a single commutator $[s,w]$, where $w$ is one of the two words of length $|k|$. Thus, $\cl_{\Aut}$ is equal to one for any non-trivial element in $[\Aut(D_\infty),D_\infty)]$ and $\scl_{\Aut}$ vanishes. 
\end{example}

\begin{example}
Consider $G = \PSL(2,\Z)= \Z/3 \ast \Z/2$. Then $\Aut(G)$ is generated by the set $C$ consisting of  the non-trivial factor automorphism of $\Z/3$ and conjugations by letters of $\Z/3$ and $\Z/2$, since both free factors are abelian groups. Consequently, $[\Aut(G),G]$ is normally generated by commutators of the form $[c,g]=cgc^{-1}g^{-1}=c(g)g^{-1}$ for $c \in C$ and $g \in G$. In all expressions $[c,g]$ the letter $b$ representing the non-trivial element of the factor $\Z/2$ arises an even number of times. Therefore, $b \notin [\Aut(G),G]$ and the latter is not the full group $G$. 
\end{example}

\begin{lemma} \label{Aut finite index}
Let $G=A \ast B$ be a free product of freely indecomposable groups where at least one of the factors is infinite cyclic. Then $[\Aut(G),G]$ has index at most 2 in $G$. Therefore, any unbounded quasimorphism on $G$ is unbounded when restricted to $[\Aut(G),G]$.
\end{lemma}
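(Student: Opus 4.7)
The strategy is to reduce to an abelian computation, then handle the quasimorphism statement by homogenisation. First, I would observe that $[G,G]\subseteq [\Aut(G),G]$: for any $g,h\in G$, writing $c_g\in\Inn(G)\subseteq\Aut(G)$ for conjugation by $g$ (and identifying $G$ with $\Inn(G)$ via the trivial-centre condition on free products of freely indecomposable groups), the ordinary commutator $[g,h]$ equals $[c_g,h]=c_g(h)h^{-1}$. Hence $[\Aut(G),G]$ contains the commutator subgroup, is therefore normal in $G$, and its index in $G$ equals the index of its image in the abelianisation $G^{\mathrm{ab}}$.

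The next step is to compute this image. Since $[F,g]=F(g)g^{-1}$ in $G$, a generator maps to $F_{\ast}(\bar g)-\bar g$ in $G^{\mathrm{ab}}$, where $F_{\ast}$ is the map induced by $F$. By Lemma \ref{outrepresentatives} together with the description preceding it, $\Aut(G)$ is generated by factor automorphisms, partial conjugations, transvections (since a factor is infinite cyclic), and possibly a swap if $A\cong B$. Partial conjugations induce the identity on $G^{\mathrm{ab}}$ and so contribute nothing. Assuming without loss of generality $A=\Z=\langle x\rangle$, one has $G^{\mathrm{ab}}=\Z\oplus B^{\mathrm{ab}}$. If $B\not\cong\Z$, then for each $b\in B$ the transvection $\tau_b\colon x\mapsto xb$ fixing $B$ yields $[\tau_b,x]=xbx^{-1}$, which abelianises to $\bar b$; letting $b$ vary captures all of $B^{\mathrm{ab}}$. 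Together with $-2\bar x$ contributed by the factor automorphism $x\mapsto x^{-1}$, the image then contains $2\Z\oplus B^{\mathrm{ab}}$, a subgroup of index $2$ in $G^{\mathrm{ab}}$. If instead $B=\Z$, transvections on both factors already produce the full $G^{\mathrm{ab}}$, so the index is $1$. In either case $[\Aut(G),G]$ has index at most $2$ in $G$.

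For the second assertion, let $\phi$ be any unbounded quasimorphism on $G$. Its homogenisation $\bar\phi$ is also unbounded by Lemma \ref{HomogenisationQM}, so there exists a sequence $g_n\in G$ with $|\bar\phi(g_n)|\to\infty$. Since every element of $G$ has square in $[\Aut(G),G]$, we have $g_n^{2}\in [\Aut(G),G]$, and homogeneity gives $|\bar\phi(g_n^{2})|=2|\bar\phi(g_n)|\to\infty$. The inequality $|\phi-\bar\phi|\le D(\phi)$ then forces $|\phi(g_n^{2})|\to\infty$, so $\phi$ is unbounded on $[\Aut(G),G]$. The only delicate point in the whole argument is the transvection calculation, which is precisely where the presence of an infinite cyclic factor is used; all other cases reduce to routine inspection of the generators listed in Section 3.
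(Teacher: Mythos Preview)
Your proof is correct and uses the same key computations as the paper: the transvection $\tau_b$ yielding $b$ (up to conjugation) and the factor inversion $x\mapsto x^{-1}$ yielding $x^{-2}$, together with a homogenisation argument for the quasimorphism part. The only organisational difference is that you first observe $[G,G]\subseteq[\Aut(G),G]$ and thereby reduce the index computation to $G^{\mathrm{ab}}$, whereas the paper works directly in $G$ by exhibiting $B$ and $2\Z$ as subgroups of $[\Aut(G),G]$ and then passing to the quotient by the normal closure of $2\Z\ast B$. Your abelian reduction is arguably cleaner, but the substance of the two arguments is identical.
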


\begin{proof}
If an unbounded quasimorphism $q$ is bounded on a finite index subgroup $H \leq G$, its homogenisation $\bar{q}$ vanishes on $H$. Then $\bar{q}$ descends to a map of sets on the finite set $G/H$ implying that the image of $\bar{q}$ is bounded and so was the image of $q$ to begin with. Therefore, any quasimorphism $q$ with unbounded image cannot be bounded on $H$. So, it remains to show that the index of $[\Aut(G),G]$ in $G$ is finite to prove the lemma. 

First, consider the case where $A$ and  $B$ are both infinite cyclic and so $G$ can be identified with $F_2$, the free group of rank 2. Let $x$ and $y$ be standard generators. Consider the automorphism $\varphi$ of $F_2$ defined by $\varphi(x)=yx$ and $\varphi(y)=y$. Then $[\varphi, x]=\varphi(x) \cdot x^{-1} = y$. So $\langle y \rangle \leq [\Aut(F_2),F_2]$. By symmetry of the generating set it holds that $\langle x \rangle \leq [\Aut(F_2),F_2]$ as well and it follows that $[\Aut(F_2),F_2]=F_2$. 

Second, consider the case where only one of the factors is infinite cyclic. Without loss of generality assume $A= \Z$. Let $x$ denote a generator of $A$. For any $b \in B$ we can define the transvection $\varphi_b$ on $x$ by $\varphi_b(x)=bx$ and by $\varphi_b(b^\prime)=b^\prime$ for all $b^\prime \in B$. Then $\varphi_b$ is an automorphism and satisfies $[\varphi_b,x]= \varphi_b(x) \cdot x^{-1} =b$ for all $b \in B$. Thus, $B \leq [\Aut(G),G]$. Moreover, denoting the non-trivial factor automorphism of $A= \Z$ by $f$ we compute $[f,x]=f(x) \cdot x^{-1} = x^{-2}$ and deduce that $2 \Z \leq [\Aut(G),G]$. Therefore, 
$2 \Z \ast B \leq [\Aut(G),G]$. Since $[\Aut(G),G]$ is a normal subgroup, it holds that $N \leq [\Aut(G),G]$ where $N$ is the normal closure of $2 \Z \ast B$. However, $G/N \leq \Z/2$ and so $[\Aut(G),G]$ has at most index 2 in $G$. 
\end{proof}

\begin{proof}[Proof of Theorem \ref{T2}]
If one of the factors is infinite cyclic, Theorem \ref{T1} implies the existence of an unbounded Aut(G)-invariant homogeneous quasimorphism, which is unbounded on $[\Aut(G),G]$ according to Lemma \ref{Aut finite index}. The statement then follows from Lemma \ref{Kawasaki Lemma}. 

Assume from now on that neither $A$ nor $B$ is infinite cyclic. We will prove the theorem by explicitly constructing an element in $[\Aut(G),G]$ together with homogeneous Aut-invariant quasimorphism which is non-trivial on that element. 

The only non-trivial group with no non-trivial automorphisms is $\Z/2$. Thus, one of the factors has to have a non-trivial automorphism since $A \ast B$ is not the infinite dihedral group. Without loss of generality we assume that $|\Aut(A)|\geq 2$. Let $a_1 \in A$ such that $f(a_1)=a_2 \neq a_1$ for some $f \in \Aut(A)$. Then we compute that $[f,a_1]=a_2 a_1^{-1}$ is a non-trivial element in $[\Aut(G),G]$. Since $[\Aut(G),G]$ is normal, it holds for any $h \in B$ that $h a_2 a_1^{-1} h^{-1} \in [\Aut(G),G]$. Thus, $(a_2 a_1^{-1}h a_2 a_1^{-1} h^{-1})^k \in [\Aut(G),G]$ for any $k \in \N$ and $h \in B$. 

First, assume that $A$ and $B$ are not isomorphic. Since $|\Aut(A)|\geq 2$, it holds that $|A| \geq 3$ and we can choose a non-trivial $a \neq a_2 a_1^{-1}$ and fix some non-trivial $h \in B$. We define for $n_1, \dots, n_\ell \in \N$ the word 
\begin{align*}
w = \prod_{i=1}^\ell \big ( [a,h] (a_2 a_1^{-1}h a_2 a_1^{-1} h^{-1})^{n_i} \big ) \in [\Aut(G),G].
\end{align*}
We calculate 
\begin{align*}
\Acode(w) = 
\begin{cases}
(1,1,2n_1,1,1,2n_2, \dots, 1,1,2n_\ell) & \text{if} \hspace{1mm} a^{-1} \neq  a_2 a_1^{-1}, \\
(1,2n_1+1,1, 2n_2+1, \dots ,1,2n_\ell+1)  & \text{if} \hspace{1mm} a^{-1} =  a_2 a_1^{-1}.
\end{cases}
\end{align*}
Set $z = \Acode(w)$. For all $n \in \N$ it holds that $\Acode(w^n)=(z, \dots,z)$. Choose $\ell \geq 3$ together with large and distinct $n_1, \dots, n_\ell \in \N$, which implies that $z$ is generic. It follows from Proposition \ref{freeprod no Z prop} that $\bar{f}^A_z$ is an Aut-invariant quasimorphism. By construction $f^A_z$ satisfies $f^A_z(w^n)=n$ for all $n \in \N$ and so $\bar{f}^A_z(w)>0$ for our choice of $w \in [\Aut(G),G]$. The statement then follows from Lemma \ref{Kawasaki Lemma}. 

Second, assume $A \cong B$. Let $s$ denote a swap automorphism. Set $b_i = s(a_i)$ for $i \in \{1,2\}$. Then the element $[s f s^{-1}, b_1]= b_2b_1^{-1} \in [\Aut(G),G]$ is non-trivial and belongs to the factor $B$. Set $b=s(a)$. Then $a^{-1}= a_2 a_1^{-1}$ is equivalent to $b^{-1}=b_2 b_1^{-1}$. For $n_1, \dots, n_\ell \in \N$ we define the word
\begin{align*}
w = \prod_{i=1}^\ell \big ( [a,b] (a_2 a_1^{-1} b_2 b_1^{-1})^{n_i} \big ) \in [\Aut(G),G].
\end{align*}
Observe, that $\Acode(w)= \Bcode(w)$. As in the previous case, 
\begin{align*}
\Acode(w) = 
\begin{cases}
(1,1,n_1,1,1,n_2, \dots, 1,1,n_\ell) & \text{if} \hspace{1mm} a^{-1} \neq  a_2 a_1^{-1}, \\
(1,n_1+1,1, n_2+1, \dots ,1,n_\ell+1)  & \text{if} \hspace{1mm} a^{-1} =  a_2 a_1^{-1}.
\end{cases}
\end{align*}
Set $z = \Acode(w)$ as before and choose $\ell \geq 3$ and $n_1, \dots, n_\ell \in \N$ large enough and distinct, so that $z$ is generic. Again, we calculate $f^A_{z}(w^n)=n= f^B_{z}(w^n)$, which implies that $(\bar{f}^A_{z} + \bar{f}^B_{z})(w)>0$. Proposition \ref{freeprod no Z prop} implies that $\bar{f}^A_{z} + \bar{f}^B_{z}$ is an homogeneous Aut-invariant quasimorphism and so applying Lemma \ref{Kawasaki Lemma} concludes the proof.
\end{proof}

We will now give a few more examples of free products $G$ where $[\Aut(G),G]=G$ and therefore the notions of Aut-invariant (stable) commutator length are defined on all of $G$.  

\begin{example}
For a product $G=A \ast B$ of two freely indecomposable perfect groups $A$ and $B$ it holds that $[\Aut(G),G]=G$. Indeed, since $A$ perfect, it holds that $A= [A,A] \leq [\Aut(G),G]$. Similarly, $B \leq [\Aut(G),G]$ and $A$ and $B$ generate $G$ it follows that $[\Aut(G),G]=G$.
\end{example}

\begin{example}
$G= \Z/p \ast \Z/q$ satisfies $[\Aut(G),G]= G$ for $p,q \geq 3$ prime. Let $m \colon \Z/p \to \Z/p$ be multiplication by 2, which is a factor automorphism. Consider the standard generator $1_p \in \Z/p$. It holds that $[m,1_p]=m(1_p)-1_p=1_p$. Thus, $\Z/p \leq [\Aut(G),G]$. Similarly, $\Z/q \leq [\Aut(G),G]$ and so $[\Aut(G),G]= G$.
\end{example}

\begin{example}
Let $k, \ell \geq 2$. Then $G= A^k \ast B^\ell$ satisfies $[\Aut(G),G]= G$ for all non-trivial abelian groups $A$ and $B$. For simplicity of notation consider the case $k=2$. Let $\phi$ be the factor automorphism defined by $\phi(a,0)=(a,a)$  and $\phi(0,a)=a$ for all $a \in A$. Then $[\phi, (a,0)]=\phi(a,0)-(a,0)=(0,a)$ for all $a \in A$. Thus, $A \times 0 \leq [\Aut(G),G]$. Analogously, $0 \times A \leq [\Aut(G),G]$. Since these factors generate $A^2$ it follows that $A^2 \leq [\Aut(G),G]$. Similarly, $B^\ell \leq [\Aut(G),G]$ and so $[\Aut(G),G]= G$.
\end{example}

\begin{example}
In all of the above examples for $G=A \ast B$ the equality $[\Aut(G),G]=G$ is always derived by showing that both factors $A$ and $B$ form subgroups of $[\Aut(G),G]$. Thus, any combination of free factors appearing in the three examples above still satisfies this equality, for example $A=\Z/p$ for $p \geq 3$ prime and $B$ any freely indecomposable perfect group. 
\end{example}

\section*{Acknowledgements}

I like to thank Jarek K\k{e}dra and Benjamin Martin for their continued support and all their helpful comments. This work was partly funded by the Leverhulme Trust Research Project Grant RPG-2017-159.

\vspace{8mm}
\noindent
BASTIEN KARLHOFER, UNIVERSITY OF ABERDEEN, UK.\\
\noindent 
EMAIL: r01bdk17@abdn.ac.uk

\end{document}